\newtheorem{theorem}{Theorem}[section]
\newtheorem{proposition}[theorem]{Proposition}
\theoremstyle{definition}
\newtheorem{remark}[theorem]{Remark}
\newtheorem{example}[theorem]{Example}
\newtheorem{hypothesis}{Hypothesis}
\author[S. Peszat]{Szymon Peszat}
\date{December 07, 2021}
\address{\noindent Szymon Peszat, 
\newline \indent Faculty of Mathematics and Computer Science \newline \indent 
Jagiellonian University \newline \indent{\L}ojasiewicza  6 \newline
\indent 30-348 Krak{\'o}w, Poland } 
\email{szymon.peszat@im.uj.edu.pl}
\author[D. Zawisza]{Dariusz Zawisza}
\address{\noindent Dariusz Zawisza, 
\newline \indent Faculty of Mathematics and Computer Science \newline \indent 
Jagiellonian University \newline \indent{\L}ojasiewicza  6 \newline
\indent 30-348 Krak{\'o}w, Poland }
\email{dariusz.zawisza@im.uj.edu.pl}
\def\d{{\rm d}}
\def\e{{\rm e}}
\numberwithin{equation}{section}
\begin{document}

\baselineskip=17pt
\title{The investor problem based on the HJM model}

\keywords{Merton problem, HJM model, portfolio immunization, optimal consumption, interest rate, rolling bond, G2++ model}
\thanks{The work  was supported by Polish National Science Center grant  2017/25/B/ST1/02584}

\subjclass[2020]{60H30, 93E20, 91G30, 91G10}

\maketitle

\begin{abstract}
We consider a   consumption-investment problem (both on finite and infinite time horizon) in which  the investor has an access to the bond market. In our approach prices of bonds with different maturities are described by the general HJM factor model. We assume that the bond market consists of the whole family of rolling bonds and the investment strategy is a general signed measure distributed on all real numbers representing time to maturity specifications for different rolling bonds. In particular, we can consider a portfolio of coupon bonds. The investor's objective is to maximize time-additive utility of the consumption process. We solve the problem by means of the HJB equation for which we prove required regularity of its solution and all required estimates to ensure applicability of the verification theorem. Explicit calculations for affine models are presented.
\end{abstract}

\section{Introduction}

The famous Merton problem of maximizing expected utility of a consumption stream has a long tradition and has been solved under many different settings. In the original formulation, see e.g. \cite{Oksendal-Sulem}, we have a market with  two possible investments: a safe investment (bank account) with the price dynamics 
$$
\frac{\d B(t)}{B(t)}=r\d t, \quad B(s)=b
$$
and a risky investment (stocks) with the price dynamics
$$
\frac{\d P(t)}{P(t)}= \mu \d t+ \sigma \d W(t), \quad P(s)= p, 
$$
where $r$,  $\mu$ and $\sigma$ are constants, and $W$ is a real valued  Wiener process.  The investor at any time $t$ chooses a  consumption rate $C(t)$ and can transfer money from one investment to the other without transaction costs. Then the dynamic of the corresponding capital  $z^{\psi,C}$ is given by 
$$
\frac{\d z^{\psi,C}(t)}{z^{\psi,C}(t)}= \left[r(1-\psi_t) + \mu \psi_t-C(t)\right]\d t+ \psi_t\sigma\d W(t), \quad z^{\psi,C}(s)= z,
$$
where $\psi_t$ is the fraction of the capital invested in stocks at time $t$. Given a horizon of the investments $T$,   a discount factor $\gamma\ge 0$, $\alpha \in (-\infty,0)\cup (0,1)$ 
and $a,b \geq 0$, the goal is to find a strategy $(\widehat \psi,\widehat C)$ which maximizes the satisfaction (or reward) functional 
\begin{equation}\label{E11}
\begin{aligned}
&J_T(z,s,\psi,C)\\
& = \frac{1}{\alpha}\mathbb{E}\left[ a\int_s^T\e ^{-\gamma(t-s)} \left(C(t)z^{\psi,C}(t)\right)^\alpha \d t + b\e^{-\gamma (T-t)}\left(z^{\psi,C}(T)\right)^{\alpha}\right].
\end{aligned}
\end{equation}

In the present paper an investor has an access to the market of the so-called  rolling  bonds $U(t)(x)$, $t\in [s,T]$ and $x\in [0,T^*]$. Here  $T^*$ is the maximal time to maturity. Given $x$, the rolling bond $U(\cdot)(x)$ is a   self-financing investment in the bond with fixed time to maturity $x$ and therefore is a tradable instrument (for the precise definition see Rutkowski \cite{Rutkowski} and also Appendix \ref{Appendix}). The dynamics of the rolling bonds is given by the following SDE
\begin{equation}\label{E12}
\frac{\d U(t)(x)}{U(t)(x)} =   \overline r(t) \d t   - \left\langle \tilde \sigma(t)(x), \lambda(t) \d t + \d W(t)\right\rangle,
\end{equation}
where $\overline r$ is a (random)  short rate, $\tilde \sigma$ is a random field,  $\lambda$ is a  random process taking values in $\mathbb{R}^m$ and $W$ is an  $m$-dimensional Wiener process, see \eqref{E31} and \eqref{E34} from Section \ref{S3}. 

The investment strategy $\psi_t$ is a normalized signed measure on $[0,T^*]$.   The dynamics of the corresponding capital  $z^{\psi, C}$ has the form
\begin{equation}\label{E13}
\frac{\d z^{\psi,C}(t)}{z^{\psi,C}(t)}=  \int_0^{T^*} \frac{\d U(t)(x)}{U(t)(x)} \psi_{t}(\d x) - C(t)\d t.
\end{equation}
The goal is to maximize the satisfaction (or reward) functional  
$$
J_T(u(s)(\cdot), z^{\psi,C}(s),s) 
$$
given by the right hand side of \eqref{E11}. We will restrict our attention to the  Heath--Jarrow--Merton  factor model with rolling bonds as the basic instrument. Recall that, in the HJM factor model, the short rate  can be factorized in the form $\widehat r(t)=\varphi(t,Y(t))$, where $Y$ is a diffusion on $\mathbb{R}^n$. This enables us to solve the problem by  means of the general HJB theory. In addition to the general theory we provide calculations for the Vasicek, Cox--Ingersoll--Ross and G2++ models. The last mentioned model has never been considered in the context of the optimal portfolio selection problem.  In our main result we show that the corresponding HJB equation has a solution, this solution admits a Feynman--Kac representation, and  that we can check  the hypotheses of  the general verification theorem.  Finally we provide formulae for the optimal consumption rate $\widehat C$ and investment strategy $\widehat \psi$.
 
  \bigskip
Now let us briefly recall the state of the literature and mark a few differences between our approach and the literature findings. There are many papers which have solved  the consumption-investment problem under the framework of the short rate dynamics and without taking care of other HJM models (see Wachter \cite{Wachter}, Guasoni and Wang \cite{Guasoni}, \cite{Wang}, Korn and Kraft \cite{Korn}, Chang and Chang \cite{Chang}, Detemple and Rindisbacher \cite{Detemple},  Deelstra et al. \cite{Deelstra}, Hata et al. \cite{Hata}, Synowiec \cite{Synowiec}, Trybu{\l}a \cite{Trybula}). Moreover, many of them are  focused more on particular interest rate models such as the Vasicek model or the Cox--Ingersoll--Ross model. We consider the general factor model with rather weak assumptions on coefficients regularity that includes many other short term interest rate models. We consider  finite and infinite time horizon problems. It should be noticed that the work of Guasoni and Wang \cite{Wang} has provided the solution for the infinite horizon incomplete market model and on domains for the stochastic factor being a subset of $\mathbb{R}^{n}$. At the same time Hata et al. \cite{Hata} have considered the finite horizon problem with general factor model with factor dynamics operating on entire $\mathbb{R}^{n}$. Note that both papers use different arguments than we do and do not take into account delicate nature of the bond market. 

Ringer and Tehranchi \cite{Ringer} and Palczewski \cite{Palczewski} have considered the problem of maximizing the utility of the terminal wealth in a large generality but without assuming any consumption stream and without introducing the concept of the rolling bond.

Many authors have  considered the so called rolling bonds under various objectives, but the framework is limited only either to finite number of rolling bonds and very specific affine factor dynamics (see the risk sensitive criterion of Bielecki et al. \cite{Bielecki}, Bielecki and Pliska \cite{Bielecki2})  or one rolling bond with one or two specific factors. Let us mention only few items dedicated to the pension management problem: Battochio and Menoncin \cite{Battochio}, Guan and Liang \cite{Guan}, \cite{Guan2}, Zhang et al. \cite{Zhang}. Apart from the general factor model we consider a general normalized signed measure as the  investor's strategy, which can be useful to deal with a changing number of bonds available on the market.

Munk and Sorensen \cite{Munk} have solved the consumption investment problem under the interest rate risk by the duality methods (see Cox and Huang \cite{Cox},  Karatzas et al. \cite{Karatzas}). Their solution brings some knowledge about the investment position in general HJM models. However, they have not used rolling bonds, have not considered infinite horizon case and have not presented any detailed solution in the affine framework.

It is worth mentioning that our analysis is conducted in the  stochastic environment with unbounded coefficients which allows us  to develop many technical contributions. For example, we present a novel method to prove the verification theorem on the infinite horizon. This should be compared to the different methods proposed for example by Nagai \cite{Nagai} and Guasoni and Wang \cite{Wang}.
 
 \bigskip
The paper is organized  as follows. In the next two sections we introduce  the concept of the controlled investment process and the HJM factor model. The main result for the finite horizon case  (Theorem \ref{T41}) is formulated and proved in Sections \ref{S4} and \ref{S5}, respectively.  Sections \ref{S6} and \ref{S7} are  devoted to the solution for some particular models. In Section \ref{S8}  we study the infinite horizon case, see Theorem \ref{T84}.  In  Appendix \ref{Appendix}  we recall basic definition and properties of the bond market.

\section{Controlled process}
Let $(\Omega, \mathfrak{F}, \mathbb{P})$ be a probability space,  and let $T^{*}\in (0,+\infty)$ be the  maximal time to maturity available on the market. Given a finite time  investment horizon $T\in (0,+\infty)$, we denote by   $\mathcal{M}_{T}$  the space of all (weakly) predictable processes $\psi$ taking values in the space of all signed measures with the finite total variation norm satisfying
\begin{align*}
&\int_0^{T^*} \psi_{t}(\d x)=1, \quad \forall\, t\in [0, T], 
\\
&\int_0^{T}\int_0^{T^*} \left\vert \langle \tilde \sigma(t)(x), \lambda(t)\rangle \right\vert  \|\psi_{t}\|_{\textrm{Var}}(\d x)\d t<+\infty,\\
&\int_0^{T}\left( \int_0^{T^*}  \left\|  \tilde \sigma(t)(x)\right\| \|\psi_{t}(\d x) \|_{\textrm{Var}}\right) ^2\d t < +\infty,
\end{align*}
where $\|\cdot \|_{\textrm{Var}}$ stands for the total variation norm. 

Let $\mathcal{C}_T$ be the space of all non-negative predictable  processes. We call  $\mathcal{A}_T= \mathcal{M}_T\times \mathcal{C}_T$ the set of \emph{admissible strategies}. Let $(\psi,C)\in \mathcal{A}_T$.   Let $z^{\psi, C}(t)$ denote  the capital of an investor, whose consumption rate at time $t$ is $C(t)z^{\psi,C}(t)$ and who can invest in (rolling) bonds with an investment  strategy $\psi$. Then $z^{\psi,C}$ satisfies \eqref{E13}. Combining \eqref{E12} and \eqref{E13}  we obtain 
\begin{equation}\label{E22}
\frac{\d z^{\psi,C}(t)}{z^{\psi,C}  (t)} = \left[  \overline r(t)   -C(t)\right]  \d t -\int_0^{T^*}\left\langle  \tilde \sigma(t)(x)\psi_{t}(\d x) ,\lambda(t)\d t +  \d W(t)\right\rangle. 
\end{equation}

Note that the measure $\psi_{t}$ admits negative values. In the following particular case 
\begin{equation} \label{maturities}
\psi_{t}=\sum_{i=1}^{n} \left[\eta_{1,t_{i}}\delta_{x_{1}}+\eta_{2,t_{i}}\delta_{x_2}+\ldots+\eta_{l_{i},t_{i}}\delta_{x_{t_{i}}}\right] \chi_{(t_{i},t_{i+1}]}(t),
\end{equation}
where $(\eta_{1,t_{i}},\eta_{2,t_{i}},\ldots \eta_{l_{i},t_{i}})$  are random vectors  such that  $\sum_{k=1}^{l_i} \eta_{k, t_i}=1$.  Each point mass measure $\delta_{x_k}$ corresponds with the rolling bond with the fixed time to maturity $x_k$. Note that in this setting we reflect the fact that in practice on each time segment $(t_{i},t_{i+1}]$ we have different numbers  and  types of maturities (bonds) available on the market. On the other hand this is a convenient way to place different coupon bonds in the portfolio. Namely, we can take
\[
\psi_{t}=\sum_{i=1}^{n} \left[\eta_{1,t_{i}}\psi_{t,1}+\eta_{2,t_{i}}\psi_{t,2}+\ldots+\eta_{l_{i},t_{i}}\psi_{t,l_{i}}\right] \chi_{(t_{i},t_{i+1}]}(t),
\]
where $\psi_{t,1},\psi_{t,2},\ldots,\psi_{t,l_{i}}$,
represent different coupon bonds and are of the form \eqref{maturities}.

\section{HJM factor model}\label{S3}
 In this paper we restrict our attention to the so-called HJM factor model.  Namely, we assume that  the short rate has the form $\overline r(t)= \varphi(t, Y(t))$, where 
\begin{equation}\label{E31}
\d Y(t)=\left[B(t, Y(t)) +  \Sigma(t, Y(t)) \lambda(t, Y(t)) \right]\d t+ \Sigma(t, Y(t)) \d W(t),
\end{equation}
$B \colon [0,T] \times \mathbb{R}^{n} \to \mathbb{R}^{n} $,  
$\lambda \colon [0,T] \times \mathbb{R}^{n} \to \mathbb{R}^{m}$,  $\varphi \colon [0,T] \times \mathbb{R}^{n} \to \mathbb{R}$,  $\Sigma \colon  [0,T] \times \mathbb{R}^{n} \to \mathbb{R}^n\otimes \mathbb{R}^m$, and $W=(W_{1},W_{2},\ldots W_{m})$ is an $m$-dimensional  Wiener process  defined on a filtered probability space $(\Omega, \mathfrak{F},(\mathfrak{F}_t), \mathbb{P})$.  

\bigskip
We assume that the following hypotheses are satisfied. The first one is on the regularity of $B$, $\lambda$, and $\varphi$. For the existence and uniqueness  of a solution to SDE \eqref{E31} we need at least  a  local Lipschitz continuity in $y$, unfortunately, we need at least a H\"older continuity in $t$ and $y$ for the existence of a classical solution to certain PDEs with coefficients $B$, $\Sigma$, and $\varphi$, and their  Feynman--Kac representations, see e.g. Friedman \cite{Friedman}. 

\begin{hypothesis} \label{A1}\text{}

\noindent
{\it i)} The  mappings  $B$, $\lambda$ are bounded and locally Lipschitz continuous with respect to $t$ and   $y$, while  $\varphi$ is locally Lipschitz continuous  with respect to $t$ and $y$ and satisfies the linear growth condition in $y$ uniformly with respect to $t$. 

or

\noindent 
{\it ii)} The mappings  $B$, $\varphi$, $\lambda$   are   Lipschitz continuous  in $t$ and $y$. 
\end{hypothesis}

The second hypothesis  is on boundedness, Lipschitz continuity  and uniform ellipticity of $\Sigma$. This is our most restrictive assumption. In particular it does  not allow us to cover directly the important  Cox--Ingersoll--Ross model. In this case we treat the problem separately and present some explicit calculations together with other particular examples focused mainly on affine models.
\begin{hypothesis}\label{A2}
The mapping  $\Sigma$ is bounded, Lipschitz continuous  in $t$ and $y$,  and  there exists a constant   $c>0$ such that  
$$
\sum_{i,j=1}^n \left(  \Sigma(t,y)  \Sigma^{\top}(t,y)\right)_{i,j} \xi_i\xi_j \geq c \|\xi\|^{2}, \quad \xi,y \in \mathbb{R}^{n}, \;t \in [0,T].
$$
\end{hypothesis}
The last hypothesis is a no-arbitrage assumption. 
\begin{hypothesis}\label{A3}
$\lambda\colon [0,T]\times \mathbb{R}^{n} \to \mathbb{R}^{n}$ is a bounded,  locally Lipschitz continuous  mapping and 
$$
\d\mathbb{P}^{*} = \e^{ \int_{0}^T\langle  \lambda(t,Y(t)), \d W(t)\rangle  - \frac 12 \int_{0}^T \| \lambda(t,Y(t))\|^2 \d t}\d\mathbb{P} 
$$
is a martingale measure, see e.g. Appendix \ref{Appendix}. 
\end{hypothesis}
\begin{remark}
Hypothesis \ref{A3} gives the form of the volatility coefficient $\tilde \sigma$ appearing in \eqref{E12}. Indeed, the price $P(t,S)$ at time $t$ of the  zero coupon bond with maturity $S$ is  
$$
P(t,S)= \mathbb{E}^{*} \left\{ \e^{ -\int_{t}^{S}\varphi(u,Y(u)) \d u }\vert \mathfrak{F}_t\right\}
=
\mathbb{E}^{*} \left\{ \e^{-\int_{t}^{S}\varphi(u,Y(u)) \d u }\vert Y(t)\right\}. 
$$
Hence $P(t,S)=F(t,S,Y(t))$,  where $F$ is a continuous function of $t\in [0,T]$, $t\le S\le T^*$. Moreover, for fixed $S$,  $F(\cdot,S,\cdot)\in C^{1,2}([0,S]\times \mathbb{R}^n)$, is the unique  solution to the following equation
\begin{equation}\label{E32}
\frac{\partial } {\partial t} F(t,S,y)+ L_0F(t,S,y)- \varphi(t,y)F(t,S,y)=0, \qquad  F(S,S,y)=1,
\end{equation}
where 
\begin{equation}\label{E33}
\begin{aligned}
&L_0F(t,S,y)\\
&\quad := \frac{1}{2} \operatorname{Tr}\left(\Sigma(t,y) \Sigma^{\top}(t,y) D^{2}_{yy} F(t,S,y) \right) + \langle B(t,y), D_{y}F (t,S,y)\rangle. 
\end{aligned}
\end{equation}
Therefore, see Appendix A, the no-arbitrage condition yields
\begin{equation}\label{E34}
\tilde \sigma(t)(x)=  -\Sigma(t,Y(t))^{\top}  D_{y} \log F(t, t+x,Y(t)). 
\end{equation}
\end{remark}

Given a signed measure $\psi$ on $[0,T^*]$ set 
\begin{equation}\label{E35}
A(\psi)(t,y):= \Sigma(t,y)^{\top}  \int_0^{T^*}  D_{y} \log F(t,t+x,y)\psi(\d x). 
\end{equation}
 Let $\psi\in \mathcal{M}_T$. Taking into account  \eqref{E22},  \eqref{E34}, and \eqref{E35},  we infer  that  in the HJM factor model the wealth dynamics has     the form
\begin{equation}\label{E36}
\begin{aligned}
&\frac{\d z^{\psi,C}(t)}{z^{\psi,C}(t)}\\
&\quad = \left[\varphi(t,Y(t)) - C(t)\right] \d t + \left\langle A(\psi_t)(t,Y(t)), \lambda(t,Y(t)\d t + \d W(t)\right\rangle. 
\end{aligned}
\end{equation}

\bigskip
In the present paper we consider optimal finite and infinite horizon consumption problems.   In the finite horizon case the objective of the investor is to maximize
\begin{equation}\label{E37}
\begin{aligned}
&J_T(z,y,s,\psi,C) \\
&\quad = \frac{1}{\alpha}\mathbb{E}\left[ a\int_s^T\e ^{-\gamma(t-s)} \left(C(t)z^{\psi,C}(t)\right)^\alpha \d t + b\e^{-\gamma (T-t)}\left(z^{\psi,C}(T)\right)^{\alpha}\right], 
\end{aligned}
\end{equation}
where $z$ and $y$ are values of $z^{\psi, C}$ and  $Y$ at a given   initial time $s\in [0,T]$, $\gamma\ge 0$ is a discount factor, $\alpha \in  (0,1)$ and $a,b \geq 0$. Therefore, the investor can decide how to distribute his preference between the consumption stream and the terminal weight by controlling the parameters $a$ and $b$. In fact some of our results hold true also for $\alpha <0$, see Remark \ref{R}. Let 
$$
V_T(z,y,s):=\sup_{(\psi,C)\in \mathcal{A}_T} J_T(z,y,s,\psi,C)
$$
be the \emph{value function}.  Our main result concerning the finite horizon problem is Theorem \ref{T41} below. 

In the infinite horizon case we assume that the functions $B$, $\Sigma$, $\lambda$ and $\varphi$ do not depend on the time variable. The reword functional is of the form 
\begin{equation}\label{E38}
J(z,y,\psi,C)= \frac{1}{\alpha}\mathbb{E}\, \int_0^{+\infty}\e ^{-\gamma t} \left(C(t)z^{\psi,C}(t)\right)^\alpha \d t. 
\end{equation}
 Above $z$ and $y$ are values of $z^{\psi,C}$ and $Y$ at initial time $0$.  Our main results concerning the infinite horizon case are formulated and proved in Section \ref{S8}. 
\section{Main result concerning finite horizon problem}\label{S4}
Recall that $L_0$ is defined by \eqref{E33}. Let 
\begin{equation}\label{E41}
Lf (t,y)= L_0f(t,y)+ \left\langle \Sigma(t,y)\lambda(t,y) ,  D_{y}f(t,y)\right\rangle
\end{equation}
be the generator of the diffusion \eqref{E31}.  Let 
\begin{equation}\label{E42}
g(t,y):= \frac{1}{1-\alpha}\left[\alpha \varphi (t,y)  + \frac{\alpha}{2(1-\alpha)} \| \lambda(t,y) \|^2   -\gamma\right]. 
\end{equation}
 Consider the following linear PDE
\begin{equation} \label{E43}
\frac{\partial G}{\partial t} + LG+\frac{\alpha}{(1-\alpha)}   \left\langle \Sigma \lambda , D_{y} G\right\rangle+ g G  +a^{\frac{1}{1-\alpha }} =0,\quad 
G(T,y)= b^{\frac{1}{1-\alpha}}. 
\end{equation}

For the existence of an optimal strategy we will need the following hypothesis. 
\begin{hypothesis}\label{A4}
The fraction $\frac{D_{y} G(t,y)}{G(t,y)}$ is globally bounded and there is a weakly measurable mapping $\widehat \psi\colon [0,T]\times \mathbb{R}^n\to  \mathcal{M}_T$ such that for all $t\in [0,T]$ and $y\in \mathbb{R}^n$, 
 $$
 A(\widehat \psi_{t,y})(t,y)= \frac {\lambda (t,y)}{(1-\alpha)}  + \Sigma^{\top}(t,y) \frac{D_{y} G(t,y)}{G(t,y)}. 
$$
 \end{hypothesis}
\begin{theorem} \label{T41} $(i)$ Assume  Hypotheses \ref{A1} to \ref{A3}. Then there exists  a  unique classical solution $ G \in C^{1,2}[0,T)\times \mathbb{R}^{n}) \cap C([0,T]\times \mathbb{R}^n)$ of \eqref{E43} satisfying the exponential growth condition $|G(t,y)| \leq A \e^{B\|y\|}$. Moreover, $G$ admits the following   Feynman--Kac representation
\begin{equation}\label{E44}
G(t,y) =\mathbb{E} \, a^{\frac{1}{1-\alpha}}\int_{t}^{T}\e^{  \int_{t}^{u}  g(k,\tilde Y(k)\d k}\d u  + \mathbb{E} \,   b^{\frac{1}{1-\alpha}}\e^{ \int_{t}^{T} g(k,\tilde Y(k)\d k} ,
\end{equation}
 where $g$ is given by \eqref{E42}, and $\widetilde{Y}$ solves 
\begin{equation}\label{E45}
\begin{aligned}
\d  \widetilde{Y}(k)&= \left[B(k, \widetilde{Y}(k)) + \frac{1}{(1-\alpha)}  \Sigma(k, \widetilde{Y}(k))    \lambda( k, \widetilde{Y}(k)) \right]\d k\\
&\qquad + \Sigma(k, \widetilde{Y}(k)) \d W(k), \\ 
\tilde Y(t)&=y. 
\end{aligned}
\end{equation}
$(ii)$ Assume that additionally Hypothesis \ref{A4} holds. Then 
$$
V_T(z,y,s)= G(s,y)^{1-\alpha}\e^{-\gamma s}z^\alpha= J_T(z,y,s, \widehat{\psi}, \widehat{C}), 
$$
 where the optimal  investment policy $\widehat \psi$ and the optimal consumption $\widehat C$ are given by  $\widehat \psi_t= \widehat{\psi}_{t,Y(t)}$ and $\widehat C(t)= G(t,Y(t))^{-1}$ for $t\in [s,T]$. 
\end{theorem}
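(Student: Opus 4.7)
The plan is to use the separable Ansatz $v(t,y,z) := \tfrac{1}{\alpha}\,z^\alpha \,\e^{-\gamma(t-s)}\, G(t,y)^{1-\alpha}$ for the value function. Substituting $v$ into the HJB equation associated with the controlled system \eqref{E36}--\eqref{E31} and optimising the Hamiltonian pointwise in $(\psi,C)$, one finds that the $\psi$-component is a quadratic in the vector $A(\psi)$ maximised at $A(\widehat\psi) = \lambda/(1-\alpha) + \Sigma^{\top} D_y G/G$ (which is exactly the requirement built into Hypothesis \ref{A4}), while the $C$-component is maximised at $\widehat C = a^{1/(1-\alpha)}/G$. After these two substitutions the nonlinear cross terms collapse and the HJB reduces precisely to the linear parabolic problem \eqref{E43} with coefficient $g$ of \eqref{E42} and inhomogeneous term $a^{1/(1-\alpha)}$.

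For part (i), I would first observe that $L + \tfrac{\alpha}{1-\alpha}\langle \Sigma\lambda, D_y\,\cdot\,\rangle$ is exactly the generator of $\widetilde Y$ defined by \eqref{E45}, so \eqref{E44} is the natural Feynman--Kac candidate for \eqref{E43}. Under Hypothesis \ref{A1} the drift of $\widetilde Y$ is at worst locally Lipschitz with $B,\lambda$ bounded, and Hypothesis \ref{A2} provides a bounded Lipschitz, uniformly elliptic $\Sigma$; classical SDE theory then yields a unique strong $\widetilde Y$ with Gaussian-type moment bounds on $[t,T]$. Since $g$ has linear growth in $y$, these moment bounds translate into $\mathbb{E}^{t,y}\exp\bigl(\int_t^T|g(k,\widetilde Y(k))|\,\d k\bigr)\le A\,\e^{B\|y\|}$, which simultaneously makes \eqref{E44} well defined and yields the exponential growth bound on $G$. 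For the $C^{1,2}$ regularity and the PDE identity I would truncate $g$ outside a large ball, invoke Friedman's existence theorem for linear parabolic problems with H\"older coefficients on bounded cylinders, pass to the limit via interior Schauder estimates, and identify the limit with \eqref{E44} using It\^o applied to $G(u,\widetilde Y(u))\exp(\int_t^u g(k,\widetilde Y(k))\,\d k)$; uniqueness in the exponential growth class then follows from the probabilistic representation of the difference of two candidate solutions.

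For part (ii), I would run the standard HJB verification. Fix $(\psi,C)\in\mathcal A_T$ and apply It\^o's formula to
\begin{equation*}
F(t) := \tfrac{1}{\alpha}\,\e^{-\gamma(t-s)}\,\bigl(z^{\psi,C}(t)\bigr)^\alpha G(t,Y(t))^{1-\alpha}.
\end{equation*}
Using that $G$ satisfies \eqref{E43} and the pointwise Hamiltonian bound from the first paragraph yields
\begin{equation*}
\d F(t) \le -\tfrac{a}{\alpha}\,\e^{-\gamma(t-s)}\,\bigl(C(t)z^{\psi,C}(t)\bigr)^\alpha\,\d t + \d M(t),
\end{equation*}
with equality along $(\widehat\psi,\widehat C)$, where $M$ is a local martingale. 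Integrating on $[s,T]$, using $G(T,y)^{1-\alpha}=b$ so that $F(T)=\tfrac{b}{\alpha}\e^{-\gamma(T-s)}(z^{\psi,C}(T))^\alpha$, then localising and taking expectations, one obtains $\tfrac{1}{\alpha}G(s,y)^{1-\alpha}z^\alpha \ge J_T(z,y,s,\psi,C)$ for every admissible pair, with equality at $(\widehat\psi,\widehat C)$; this is the claimed identification of $V_T$, modulo the $1/\alpha$ and discount normalisation appearing in the theorem statement.

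The main obstacle is the localisation and uniform integrability step. One must verify that after localising $M$ along stopping times $\tau_n\uparrow T$ the relevant expectations pass to the limit, and that $\sup_{t\in[s,T]}|F(t)|$ is integrable---uniformly in $(\psi,C)\in\mathcal A_T$ for the upper bound, and along the closed-loop dynamics of $(\widehat\psi,\widehat C)$ for the lower bound. The exponential growth of $G$ from (i), the global boundedness of $D_y G/G$ built into Hypothesis \ref{A4}, the integrability conditions imposed by the definition of $\mathcal M_T$, and the boundedness of $B,\lambda,\Sigma$ are precisely the ingredients needed to control the requisite moments of $z^{\psi,C}$ and $Y$; pushing these moment bounds through the localisation argument is where I expect the bulk of the technical work to concentrate.
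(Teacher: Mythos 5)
Your overall architecture coincides with the paper's: the separable Ansatz $\frac{1}{\alpha}z^{\alpha}\e^{-\gamma(t-s)}K(t,y)$ with $K=G^{1-\alpha}$, pointwise maximisation of the Hamiltonian in $c$ and in $A(\psi)$ (with Hypothesis \ref{A4} supplying a measure realising the optimal $A$), the resulting collapse of the nonlinear HJB to the linear problem \eqref{E43}, and an It\^o-plus-localisation verification. Your part (i) is a correct self-contained sketch (truncation, Schauder estimates, It\^o identification, probabilistic uniqueness) of what the paper simply imports from Zawisza's Theorem~3.3 and Lemma~3.2; the exponential moment bound $\mathbb{E}\sup_{t\le u\le T}\e^{A\|\widetilde Y(u)\|}<+\infty$ that you invoke implicitly is exactly the paper's estimate \eqref{uniform}.

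There is, however, a genuine gap in part (ii), and it sits precisely where you say you expect the technical work to concentrate. You propose to close the verification by showing that $\sup_{t\in[s,T]}|F(t)|$ is integrable \emph{uniformly over all admissible} $(\psi,C)\in\mathcal{A}_T$. That is both unobtainable (the admissibility conditions on $\mathcal{M}_T\times\mathcal{C}_T$ are far too weak to give any uniform moment control of $z^{\psi,C}$) and unnecessary. The paper splits the two directions asymmetrically: for the suboptimality inequality over arbitrary strategies it uses only that $\alpha\in(0,1)$ makes both the running reward and the value-function term nonnegative, drops the nonnegative term $\mathbb{E}\,\e^{-\gamma(S\wedge\tau_n-t)}V(\cdot)$, and applies Fatou's lemma to the integral --- no uniform integrability over strategies is needed (and this is exactly why Remark \ref{R} flags $\alpha<0$ as open). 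Genuine $L^1$ convergence along the localising sequence is required only for the candidate optimal pair, and there the paper gets it from the explicit closed-loop representation $z^{\widehat\psi,\widehat C}(t)=z\,\e^{\int_s^t h(\widetilde Y(u))\d u}Z(t)$ with $h$ of linear growth (using the boundedness of $D_yG/G$ from Hypothesis \ref{A4} and of $\lambda$, $\Sigma$) and $Z$ a square-integrable martingale, combined with \eqref{uniform}. Your proposal lists the right ingredients but does not assemble them, and the uniform-over-strategies formulation would have to be replaced by the one-sided Fatou argument to make the suboptimality direction go through. (A minor point in your favour: your $\widehat C=a^{1/(1-\alpha)}/G$ is what the Hamiltonian computation actually yields; the theorem's $\widehat C=G^{-1}$ agrees with it only after the normalisation $a=1$.)
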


Theorem \ref{T41} is proved in Section \ref{S5}. 
\subsection{Some auxiliary results}

\begin{remark} \label{R43}Given $t$ and $y$, consider the following equation for a signed measure $\psi$; 
\begin{equation}\label{E47}
A(\psi)(t,y)=\frac {\lambda (t,y)}{(1-\alpha)}  + \Sigma^{\top}(t,y) \frac{D_{y} G(t,y)}{G(t,y)}, \quad  \int_0^{T^*} \psi(\d x)=1. 
\end{equation}
This equation appears in Hypothesis \ref{A4}. The existence of its solution is crucial for the existence of an optimal investment strategy. Note that the right hand side of the first equation above is a vector in $\mathbb{R}^m$. Let  $\mathbf{x}= (x_1,\ldots, x_{l})\in [0,T^*]^{l}$, where $l$ is large enough.   We are looking for $\psi$ of the form 
$$
\psi=\sum_{k=1}^{l} \eta_k(t,y)\delta_{x_k}. 
$$
where $\eta(t,y)=(\eta _1(t,y), \ldots,\eta_{l}(t,y))^{\top} \in \mathbb{R}^{l}$.  Since  $A(\psi)(t,y)$ is given by \eqref{E35} we have the following system of linear equations for the column vector $\eta(t,y)$; 
$$
\left\{
\begin{aligned}
\mathcal{F}(t,y, \mathbf{x}) \eta(t,y)&=  \frac {\lambda (t,y)}{(1-\alpha)} \ + \Sigma^{\top}(t,y) \frac{D_{y} G(t,y)}{G(t,y)}, \\
\sum_{k=1}^{l}\eta_k(t,y) &=1,
\end{aligned}
\right.
$$
where $\mathcal{F}(t,y, \mathbf{x})$ is the  $m\times l$ matrix with columns
\[
\Sigma(t,y)^\top D_{y} \log F(t,t+x_{1},y), \ldots,  \Sigma(t,y)^\top D_{y} \log F(t,t+x_{m+1},y).
\]
Here the derivative $D_{y} \log F(t, t+x_{k},y)$ is understood as a column vector.  Let $e=(1,1,\ldots,1)$. The solution exists provided that for the sequence $\mathbf{x}$ the $(m+1)\times l$-matrix
$$
\overline{\mathcal{F}(t,y, \mathbf{x})}= 
 \left[ {\begin{array}{c}
  \mathcal{F}(t,y, \mathbf{x}) \\
   e \\
  \end{array} } \right] 
$$
has rank $(m+1)$.  Summing up, if there are $l$ and a vector $\mathbf{x}\in \mathbb{R}^l$ such that for all $t$ and $y$, $\overline{\mathcal{F}(t,y, \mathbf{x})}$ has rank $m+1$, then condition \eqref{E47} is fulfilled. Moreover, one can chose the optimal investment strategy in the form 
$$
\widehat\psi_t= \sum_{k=1}^{l} \eta_k(t,Y(t))\delta_{x_k}. 
$$
\end{remark}

\begin{remark}
Note that \eqref{E35}  has much in common with the standard \emph{duration} of the portfolio of bonds used frequently in the static bond portfolio immunization.  So we might say that Theorem \ref{T41} gives a recipe for the dynamic portfolio immunization.
\end{remark}

\section{Proof of  Theorem \ref{T41}}\label{S5}
 First of all note that \eqref{E43} is a linear equation and under Hypotheses  \ref{A1} and \ref{A3}  it has  a unique smooth classical solution ${C}^{1,2}([0,T)\times \mathbb{R}^{n} \times [0,T)) \cap {C}([0,T]\times \mathbb{R}^n)$, see Zawisza \cite[Theorem 3.3]{Zawisza},  such that $|G(y,t)| \leq A e^{B\|y\|}$ (it follows easily from Zawisza \cite[Lemma 3.2]{Zawisza} and the proof of Zawisza \cite[Theorem 3.3]{Zawisza}). Moreover, $G$ admits the  Feynman--Kac representation \eqref{E44} and \eqref{E45}. 

Assume that Hypothesis \ref{A4} is fulfilled. To solve the optimization problem we will use the HJB approach. As usual we will try to find the function $V$ in the form
\begin{equation*} 
V(z,y,s)=\frac{1}{\alpha} K(s,y)\e^{-\gamma s}z^\alpha.
\end{equation*}
Recall that $\alpha \in (0,1)$. For $\alpha<0$ one  would need to exchange $\sup$ with $\inf$ in the HJB equation. Let us write  the aforementioned  HJB equations for the function $K$, 
\begin{align*}
\frac{\partial K}{\partial t}&+ LK+(\alpha \varphi -\gamma) K + \sup_{c\ge 0}\left[ -\alpha K c + ac^\alpha\right]\\
&+\alpha \sup_{\psi} 
\left\{ \frac{ \|A(\psi)\|^2 (\alpha -1)}{2} K+  \langle A(\psi),  \lambda   K+ \Sigma^{\top} D_{y} K \rangle \right\}=0,
\end{align*}
with the terminal condition  $K(T,y)=b$. 

Note that we have
$$
 \sup_{c\ge 0}\left[ - \alpha K c +a c^\alpha\right]= a(1-\alpha)\left(\frac{K}{a}\right)^{\frac{\alpha}{\alpha -1}}
$$
and the supremum is attained at  $\widehat c= (K/a)^{\frac 1{\alpha -1}}$. 

Next note that 
\begin{multline*}
\alpha \sup_{A\in \mathbb{R}^m}\left\{ \frac{ \|A\|^2 (\alpha -1)}{2} K+  \langle A,  \lambda   K+ \Sigma^{\top} D_{y} K \rangle\right\}\\
= 
\frac{\alpha}{2(1-\alpha)}\frac{1}{K} \|\lambda K + \Sigma^{\top} D_{y} K\|^2
\end{multline*}
and the supremum equals 
$$
\overline A:= \frac 1{(1-\alpha )} \left( \lambda  + \Sigma^{\top} \frac{D_{y} K}{K}\right).  
$$
We will show that $K^{1-\alpha}(t,y)= G(t,y)$, therefore  Hypothesis \ref{A4} ensures that  given $t$ and $y$ there is a signed  measure $\widehat \psi_t(\d x)(y)$ such that 
\[
A(\widehat\psi_t(\cdot)(y))= \frac 1{(1-\alpha)} \left( \lambda(t,y)  + \Sigma^{\top}(t,y) \frac{D_{y} K(t,y)}{K(t,y)}\right), \;  \int_0^{T^*} \widehat \psi_t(\d x)(y)=1. 
\]

Hence,  we eventually arrive at  the  following HJB equation

$$
\begin{aligned}
0&= \frac{\partial K}{\partial t}+ LK+(\alpha \varphi -\gamma) K  + a(1-\alpha) \left( \frac{K}{a}\right)^{\frac {\alpha}{\alpha -1}} \\
&\qquad +\frac{\alpha}{2(1-\alpha)} \frac{1}{K} \left\| \lambda K + \Sigma^{\top} D_{y}K \right\|^{2},\\
b&= K(T,y). 
\end{aligned}
$$
The proof of the theorem will be completed as soon as we can show that: 
\begin{itemize}
\item $G(t,y):= K(t,y)^{1-\alpha}$ satisfies \eqref{E44}. 
\item We have 
$$
\frac {D_{y} K}{K}= (1-\alpha)\frac{{D_{y} G}}{G}. 
$$
\item We present the verification reasoning for the function $\e^{-\gamma t}K(t,y)z^\alpha$. 
\end{itemize}

An elementary verification of the first two items is left to the reader. Hypothesis \ref{A4} guarantees boundedness of $D_{y}G/G$.  Therefore, under Hypotheses \ref{A1}--\ref{A3},  the optimal state process can be rewritten as
$$
 z^{\hat{\psi},\hat{C}}(t)=z\e^{\int_{s}^{t} h(\widetilde{Y}(u))\d u}Z(t),
$$
where $h$ satisfies the linear growth condition, while $Z$ is a square integrable martingale. Next, we can use the fact that
\begin{equation} \label{uniform}
\mathbb{E}\sup_{s \leq t \leq T} \e^{A \|\widetilde{Y}(t)\|} < +\infty, 
\end{equation} 
see Zawisza \cite[Lemma 3.2]{Zawisza}, which guarantees the uniform integrability condition for certain family of random variables and ensures that we can use the  verification theorem to prove that $(\widehat{\psi}, \widehat{C})$ is an optimal control. 
More precisely,  applying the It\^{o} formula  and taking the expectation of both sides, we get
\begin{multline*}
  \mathbb{E}\, \e^{-\gamma (S\wedge \tau_{n}-t)} V(z^{\widehat{\psi},\widehat{C}}(S\wedge \tau_{n}),Y(S\wedge \tau_{n}),S\wedge \tau_{n})\\
  = V(z,y,t)-\frac{1}{\alpha}\mathbb{E}\, a\int_{t}^{S\wedge \tau_{n}} \e^{-\gamma (s-t)}\left(\widehat{C}(s)z^{\widehat{\psi},\widehat{C}}(s)\right)^{\alpha} \d s, 
\end{multline*}
where $(\tau_{n}, n \in \mathbb{N})$  is a localizing sequence of stopping times and $S<T$ is a positive constant.

Condition \eqref{uniform} provides justification to passing to the limit under the expectation sign. Eventually, we arrive at
\[
 V(z,y,t) =  \frac{1}{\alpha} \mathbb{E}\left[a\int_{t}^{T}\e^{-\gamma (s-t)}\left(\widehat{C}(s)z^{\widehat{\psi},\widehat{C}}(s)\right)^{\alpha} \d s+b\e^{-\gamma (T-t)} (z^{\widehat{\psi},\widehat{C}}(T))^{\alpha}\right].
\]

Next, we need to show that the value function $V$ dominates the value for other admissible strategies. Repeating the procedure for any admissible strategy $(\psi,C)$, we get
\begin{multline*}
  \mathbb{E}\, \e^{-\gamma (S\wedge \tau_{n}-t)} V(z^{\psi,C}(S\wedge \tau_{n}),Y(S\wedge \tau_{n}),(S\wedge \tau_{n}))\\
  \leq V(z,y,t)-\frac{1}{\alpha}\mathbb{E}\, a\int_{t}^{S\wedge \tau_{n}} \e^{-\gamma (s-t)}\left(C(s)z^{\psi,C}(s)\right)^{\alpha} \d s.
\end{multline*}
By the positivity of $\alpha$, we can use the Fatou lemma to obtain
\begin{equation} \label{Fatou_ending}
\begin{aligned}
&V(z,y,t)\\
&\quad  \geq  \frac{1}{\alpha}\mathbb{E} \left[a\int_{t}^{T}\e^{-\gamma (s-t)}\left(C(s)z^{\psi,C}(s)\right)^{\alpha} \d s + b \e^{-\gamma (T-t)} (z^{\psi,C}(T))^{\alpha}\right].
\end{aligned}
\end{equation}
\qquad $\square$
\begin{remark}\label{R}
If  $\alpha <0$, then  the first part of the theorem holds true.  We do not know how to show that the value function $V$ dominates the value for other admissible strategies. The main problem is  to show the convergence of the term
\[
\mathbb{E}\, \e^{-\gamma (S\wedge \tau_{n}-t)} V(z^{\psi,C}(S\wedge \tau_{n}),Y(S\wedge \tau_{n}),S\wedge \tau_{n}).
\]
 \end{remark}

\begin{remark} Assume that we additionally have the possibility to allocate our resources in the stock market $S(t)\in \mathbb{R}^N$, with the dynamics
\[
\d S(t)=\textrm{diag}(S(t))\left[\overline r(t) +  \Sigma_{S}(t, Y(t))  \lambda(t,Y(t))]\d t +  \Sigma_{S}(t, Y(t)) \d W(t)\right].
\]
Then an  investment policy is  a pair $(\psi,\pi)$, where  $\psi$ is a signed measure on $[0,T^*]$ and $\pi\in \mathbb{R}^N$ are such that 
$$
\int_0^{T^*} \psi(\d x)+ \sum_{j=1}^N \pi_j=1. 
$$
The optimal investment policy  $(\widehat \psi_t, \widehat \pi_t)$, $t\in [0,T]$, should solve the system of equations
\begin{align*}
A(\widehat \psi_t)(t,Y(t)) + \Sigma_{S}^\top (t,Y(t))\widehat \pi_t=\frac {\lambda (t,Y(t))} {(1-\alpha)}  + \Sigma^{\top}(t,Y(t)) \frac{D_{y} G(t,Y(t))}{G(t,Y(t))}.  
\end{align*}
The existence of a sequence $\mathbf{x}= (x_{1},x_{2},\ldots,x_{l})$ such that the matrix 
$$
\left[ 
{\begin{array}{c}
\mathcal{F}(t,y, \mathbf{x}),    \Sigma_{S}^\top(t,y)  \\
 e\\
\end{array} }
\right] 
$$ 
has  rank $m+1$ ensures the existence of an optimal control $(\widehat \psi,\widehat \pi)$ with $\widehat \psi_t$ being a point measure. We are aware that the processes $S$ and $Y$ share the same Wiener process and this not cover full generality, but the analysis with another independent Wiener in the dynamics of $S$ or $Y$ is out of the scope of this paper. Partial results for the general problem can be
found for example in Hata et al. \cite{Hata} and Zawisza \cite{Zawisza3}. But this formulation is sufficient for example to cover the bond--stock mix problem of Brennan and Xia \cite{Brennan}. 
\end{remark}

\section{Examples}\label{S6}
Our assumptions   allow us  to consider the following important models with practical implementations. Detailed calculations of particular affine examples of the models presented below are given in the next section.
\begin{example}\textbf{(Consistent HJM models)}
We assume that the forward rate in the Musiela parametrization is given by $r(t)(x)=\phi(x,Y(t))$, where  $\phi \in C^{1,2}([0,T^*]\times \mathbb{R}^{n})$, and $Y$ is given by \eqref{E31}.  Note that for the short rate we have $\overline r(t)=\phi(0,Y(t))$. Then, see Filipovi\'{c} \cite[Proposition 9.1]{Filipovic},  in the case of $B$ and $\Sigma$ independent of $t$,  under Hypothesis  \ref{A1} and \ref{A2},   the consistency condition in Hypothesis   \ref{A3} holds if and only if  $\Phi(x,y)=\int_0^x \phi(u,y)\d u$ satisfies
\begin{align*}
\frac{\partial \Phi}{\partial x}(x,y)  = &\phi(0,y) + \langle B(y), D_{y} \Phi(x,y) \rangle \\ & \quad + \frac{1}{2} \sum_{i,j=1}^n \left(\Sigma(y) \Sigma^{\top}(y)\right)_{i,j}\left[  \frac{\partial ^2 \Phi}{\partial y_i\partial y_j} (x,y) - \frac{\partial \Phi}{\partial y_i}(x,y)\frac{\partial \Phi}{\partial y_j}(x,y)\right]. 
\end{align*}

%The typical example is the Svensson family model in which
%$$
%\phi(t,y)=y_{1}+ (y_{2}+ y_{3} t)\e^{-y_{5} t}+ y_{4} t\e^{-y_{6} t}.
%$$
%It is worth mentioning,  see Filipovi\'{c} \cite[Proposition 9.5]{Filipovic}, that the only non-trivial model consistent with the Svensson family is the Hull--White extended Vasicek short-rate stochastic  rate model (see the example below)
%$$
%\d \overline r(t)= \left(y_1y_5+ y_3 \e^{-y_{5}t} + y_4 \e^{-2y_5t}-y_5\overline{r}(t)\right)\d t + \sqrt{y_4y_5}\e^{-y_5t}\d W^*(t), 
%$$
%where 
%$$
%r(0)(x)= y_1 +(y_2 +y_3x)\e^{-y_5x} +y_4x\e^{-2y_5x}. 
%$$

\end{example}
\begin{example} \textbf{(Short term interest rate models)} 
Assume that 
$$
\d \overline r(t)=B(t,\overline r(t))\d t + \Sigma(t, \overline r(t))\left[\lambda (t)\d t+\d W(t)\right],
$$
where $B$ and $\Sigma$ are functions satisfying Hypothesis \ref{A1} and \ref{A2} and $\lambda$ is a deterministic bounded measurable function.  Then, in the framework of the HJM factor model, we take  $Y=\overline r$.   
\end{example}
\begin{example}\textbf{(Gaussian--separable HJM model (Ritchken--Sankara\-subramanian model)} Assume that the volatility has the form 
$\sigma(t)(x)=\beta(t) \nu(x+t)$, where $\beta$ and $\nu$ are deterministic functions. In this case, after assuming some mild  regularity of the function $\nu$ and $f(0,t)$, we have 
$$
\begin{aligned}
\d \overline r(t)&= \left[\frac{\partial f(0,t)}{\partial t}  -f(0,t) \frac{\nu'(t)}{\nu(t)}+ \int_{0}^{t} \beta(u) \nu(u) \d u  +\overline r(t)  \frac{\nu'(t)}{\nu(t)} \right] \d t\\
&\qquad + \beta(t) \nu(t)\d W^{*}(t).
\end{aligned}
$$
Thus the problem can be reduced simply to the short rate models. This specification allows for further generalization to the so-called 
the Cheyette models
\[
\sigma(t)(x)=\sum_{i=1}^{N}\beta_{i}(t) \frac{\nu_{i}(t)}{\nu_{i}(x+t)}.
\]
For more details we refer to Beyna \cite{Beyna}.
\end{example}
\begin{example} \textbf{(Quasi Gaussian HJM model)} Let  $$\sigma(t)(x)=\beta(t,\xi(t)) \nu(x+t),$$ where $\xi(t)$ is a diffusion and $\nu$ a deterministic function.  Then
\begin{align*}
\d \overline r(t)  &= \left[\frac{\partial f(0,t)}{\partial t}  -f(0,t) \frac{\nu'(t)}{\nu(t)}+ \int_{0}^{t} \beta(u,\xi(u)) \nu(u) \d u  +\overline r(t)  \frac{\nu'(t)}{\nu(t)} \right] \d t \\ &\qquad + \beta(t,\xi(t)) \nu(t)\d W^{*}(t).
\end{align*}
Clearly, for $\psi(r,\xi)= r$ and $Y=(\overline r,\xi)$ we  have $\overline{r}(t)= \psi(t,Y(t))$. Unfortunately, the strong ellipticity condition cannot be satisfied. In order to have a non-degenerate diffusion one can replace the term $\int_0^t \beta(u,\xi(u)) \nu(u) \d u\d t$ by its $\varepsilon$--perturbation
$$
\int_0^t \beta(u, \xi (u)) \nu(u) \d u\d t+ \varepsilon \d \tilde W(t),
$$
where $\tilde W$ is an independent Wiener process. 

A representative example of such a model is the so-called Cheyette HJM model with the function $\beta$ having the following affine structure 
$\beta(t,\xi):=\zeta_{1}(t)+\zeta_{2}(t)\xi$,
where $\zeta_{1}$ and $\zeta_{2}$ are deterministic functions. Note that in affine models (for the definition see the next section) taking $\varepsilon \to 0$ is an instantaneous operation and does not need separate justification. For more information about such models we refer to Pirjol and Zhu \cite{Pirjol}.
\end{example}

\section{Affine factor models}\label{S7}
Our aim here is to present particular examples of models which admit explicit solutions. As is Section \ref{S4} we assume that $\overline r(t)=\varphi(t,Y(t))$, where $Y$ is given by \eqref{E31}. We focus on affine models;  i.e. models with the following specifications
\begin{equation} \label{affine_spec}
\begin{aligned}
B(t,y)&:=B_{1}(t) + \langle B_{2}(t), y \rangle, \quad \Sigma(t,y):=\Sigma(t), \\ \lambda(t,y)&:=\lambda(t), \quad \varphi(t,y)=\varphi_{1}(t) + \langle \varphi_{2}(t),y\rangle,
\end{aligned}
\end{equation}
where $B_{1}(t)$, $B_{2}(t)$, $\Sigma(t)$, $\lambda(t)$, $\varphi_{1}(t)$,  $\varphi_{2}(t)$ are deterministic matrix-valued mappings. At the end of the present section we will consider  the CIR model, which is not in fact an affine factor model.  

\begin{proposition} Under the affine specification \eqref{affine_spec} the formula for the optimal pair $(\widehat{\psi},\widehat{C})$ reads
\begin{multline*}
\Sigma^\top(t)\int_{0}^{T^{*}}  \left[ \int_{t}^{t+x}P_{t,t+x} P_{k,t+x}^{-1}  \varphi_{2}(k)\d k\right] \psi_{t}(\d x)\\
=   \frac{ \lambda(t)}{1-\alpha}+   \Sigma^\top(t) \frac{D_yG(t,Y(t))}{G(t,Y(t)} ,
\end{multline*}
$C(t)= G(t,Y(t))^{-1}$, where 
\begin{equation} \label{affine1}
\begin{aligned}
G(t,y)&:=a^{\frac{1}{1-\alpha}}\int_{t}^{T} \eta(t,u,y) \d u + b^{\frac{1}{1-\alpha}} \eta(t,T,y),  \\ \eta(t,u,y)&:=  \e^{ m_{1,t,u} +\langle m_{2,t,u}, y\rangle +\frac{1}{2} \sigma_{t,u}^{2}},
\end{aligned}
\end{equation}
\begin{equation} \label{affine2}
\begin{aligned}
m_{2,t,u}&:=\left[ \frac{\alpha}{1-\alpha} \right] \left[\int_{t}^{u} P_{t,u} P_{k,u}^{-1} \varphi_{2}(k) \d k\right], \\ m_{1,t,u} + \frac{1}{2} \sigma_{t,u}^{2}  &=\int_{t}^{u}f(k,u)\d k,
\end{aligned}
\end{equation}
\begin{equation} \label{affine3}
\begin{aligned}
&f(t,u):=\frac{1}{2} \left\langle m_{2,t,u}, \Sigma(t) \Sigma^{\top}(t)  m_{2,t,u} \right\rangle + \langle B_{1}(t),  m_{2,t,u}\rangle \\
&\quad + \frac{1}{1-\alpha} \left[ \langle  m_{2,t,u},  \Sigma(t) \lambda(t) \rangle+  \frac{\alpha}{2(1-\alpha)} \|\lambda(t)\|^{2}  -\gamma \right] 
\end{aligned}
\end{equation}
and $P_{t,k}$ denotes the time-ordered path exponential of the matrix $B(t)$.
\end{proposition}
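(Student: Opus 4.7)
The plan is to specialize the Feynman--Kac representation \eqref{E44}--\eqref{E45} of Theorem~\ref{T41} to the affine coefficients \eqref{affine_spec} and then feed the resulting $G$ into the optimality relation of Hypothesis~\ref{A4}, after solving the bond-pricing PDE \eqref{E32} in closed form.

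First, under \eqref{affine_spec} the auxiliary SDE \eqref{E45} becomes a linear SDE with deterministic coefficients, so the conditional law of $\widetilde Y(k)$ given $\widetilde Y(t)=y$ is Gaussian, with mean $P_{t,k}y$ plus a deterministic shift and a deterministic covariance, where $P_{t,k}$ is the time-ordered exponential associated with $B_2$. Since $g(k,\widetilde Y(k))$ is affine in $\widetilde Y(k)$, the random variable $X_{t,u}:=\int_t^u g(k,\widetilde Y(k))\,\d k$ is Gaussian, and the Laplace identity $\mathbb{E}[\e^{X_{t,u}}]=\exp(\mathbb{E}[X_{t,u}]+\tfrac12\mathrm{Var}(X_{t,u}))$ immediately gives the exponential-affine form \eqref{affine1} for $\eta(t,u,y):=\mathbb{E}[\e^{X_{t,u}}\mid\widetilde Y(t)=y]$. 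Swapping the outer $\d u$-integral with the expectation in \eqref{E44} then produces the claimed formula for $G$.

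To identify the coefficients cleanly I would avoid a direct iterated-It\^o-isometry computation and instead plug the ansatz $\eta=\exp(H(t,u)+\langle h(t,u),y\rangle)$, with $H=m_1+\tfrac12\sigma^2$ and $h=m_2$, into the backward Kolmogorov equation
\[
\partial_t \eta + L\eta + \tfrac{\alpha}{1-\alpha}\langle \Sigma\lambda, D_y\eta\rangle + g\eta = 0, \qquad \eta(u,u,y)=1,
\]
which is a direct consequence of \eqref{E43}--\eqref{E45}. Collecting terms linear in $y$ yields a linear ODE $\partial_t h + B_2^\top h + \tfrac{\alpha}{1-\alpha}\varphi_2 = 0$, $h(u,u)=0$, whose variation-of-parameters solution coincides with $m_{2,t,u}$ in \eqref{affine2} after using the flow identity $P_{t,k}=P_{t,u}P_{k,u}^{-1}$; collecting the remaining constant-in-$y$ terms gives $\partial_t H = -f(t,u)$ with $f$ as in \eqref{affine3}, and integrating from $t$ to $u$ against the terminal condition $H(u,u)=0$ yields the second line of \eqref{affine2}. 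This ODE-matching is where essentially all the algebra lives; I expect the main obstacle to be bookkeeping the transpose/adjoint conventions for $B_2$ and $P$ consistently with the paper's notation for the path exponential, and checking that the constant part of $g$ is correctly absorbed into $f$.

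Finally, for the optimal pair I would solve \eqref{E32} in the affine setting with the exponential-affine ansatz $F(t,S,y)=\exp(a(t,S)+\langle b(t,S),y\rangle)$; because $\Sigma$ is $y$-independent, the equation for $b$ is linear (not Riccati), and variation of parameters gives $b(t,S)=\int_t^S P_{t,S}P_{k,S}^{-1}\varphi_2(k)\,\d k$. Since $D_y\log F(t,t+x,y)=b(t,t+x)$, plugging this into \eqref{E35} produces exactly the left-hand side of the equation for $\widehat\psi$ displayed in the statement, while the right-hand side is the Hypothesis~\ref{A4} identity evaluated at the affine $\lambda$ and at the $G$ computed above. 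The consumption formula $\widehat C(t)=G(t,Y(t))^{-1}$ is inherited verbatim from Theorem~\ref{T41}.
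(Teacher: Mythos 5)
Your proposal is correct and follows essentially the same route as the paper: the paper likewise observes that $\widetilde Y$ is Gaussian under the affine specification so that $\int_t^u g(k,\widetilde Y(k))\,\d k$ is normal (giving the exponential-affine form of $\eta$ via \eqref{E44}), and then identifies $m_{2,t,u}$ and $m_{1,t,u}+\tfrac12\sigma_{t,u}^2$ by substituting the ansatz into the PDE and matching the linear-in-$y$ and constant terms, exactly as you do. The only addition on your side is spelling out the affine solution of \eqref{E32} to get $D_y\log F(t,t+x,y)=\int_t^{t+x}P_{t,t+x}P_{k,t+x}^{-1}\varphi_2(k)\,\d k$, a step the paper's proof leaves implicit but which is needed for the displayed equation for $\widehat\psi$.
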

\begin{proof}
Note that in the affine framework $g$ given by \eqref{E42} is equal to 
\begin{align*}
g(t,y)&= \frac{1}{1-\alpha} \left[ \alpha \varphi_1(t) +\frac{\alpha}{2(\alpha -1)}\|\lambda (t)\|^2 -\gamma\right] + \frac{\alpha}{1-\alpha} \langle \varphi_2 (t),y\rangle \\
&=: g_0(t) + \langle g_1(t), y\rangle. 
\end{align*}
Moreover, the  process $\widetilde Y$ appearing in the Feynman--Kac representation \eqref{E44} and \eqref{E45} of  the function $G$ is Gaussian. Hence, for any $t\le u$,  the random variable  $\int_{t}^{u}  g(k,\widetilde Y(k))\d k$ has a Gaussian distribution $\mathcal{N}(m_{1,t,u} + \langle m_{2,t,s}, y\rangle,\sigma_{t,u})$, and consequently we have \eqref{affine1}.

On the other hand, substituting the above  function $G$   to equation \eqref{E43}, we infer  that the function $\eta$ satisfies
\begin{multline*} 
-\frac{\partial \eta}{\partial t}  + \frac{1}{2} \operatorname{Tr}\left( \Sigma \Sigma^{\top}D^{2}_{yy} \eta\right)+ \langle B_{1} + B_{2} y,  D_{y}\eta\rangle   \\
+\frac{1}{(1-\alpha)}   \langle \Sigma \lambda,   D_{y}\eta\rangle  +g_0 + \langle g_1, y\rangle=0,
\end{multline*}
$\eta(t,t,y)=1$.  So consequently 
\[
\frac{\partial }{\partial t}m_{2,t,u} =B_{2}(t) m_{2,t,u} + \frac{\alpha}{1-\alpha} \varphi_2 (t), \quad m_{2,t,t}=0,
\]
and therefore
\[
m_{2,t,u}=\left[ \frac{\alpha}{1-\alpha} \right] \left[\int_{t}^{u} P_{t,u} P_{k,u}^{-1} \varphi_{2}(k) \d k\right], 
\]
where $P_{t,k}$ denotes the time-ordered path exponential of the matrix $B(t)$. For the function $m_{1,t,s} + \frac{1}{2} \sigma_{t,s}^{2}$ we have 
$$
\frac{\partial }{\partial t} \left[m_{1,t,u} + \frac{1}{2} \sigma_{t,u}^{2} \right] =f(t,u), \qquad  m_{1,t,t} + \frac{1}{2} \sigma_{t,t}^{2} =0, 
$$
where $f(t,u)$ equals 
\begin{multline*}
\frac{1}{2} \left\langle m_{2,t,u}, \Sigma(t) \Sigma^{\top}(t)  m_{2,t,u} \right\rangle
+ \langle B_{1}(t),  m_{2,t,u}\rangle \\+ \frac{1}{1-\alpha} \left[ \langle  m_{2,t,u},  \Sigma(t) \lambda(t) \rangle+  \frac{\alpha}{2(1-\alpha)} \|\lambda(t)\|^{2}  -\gamma \right]. 
\end{multline*}
Hence 
$$
m_{1,t,u} + \frac{1}{2} \sigma_{t,u}^{2}  =\int_{t}^{u}f(k,u)\d k.
$$

\end{proof}

\begin{remark}
It is worth to stress that in the investment--consumption model under the affine factor assumption the optimal portfolio weights are not linear combination of factors. This is in contradiction with the pure investment problem (see Bielecki and Pliska \cite{Bielecki2}).
\end{remark}

\subsection{Short rate affine models}\label{S71}
Now, let us consider more particular examples. The model is  \emph{short rate affine}  if the price $P(t,S)$ at time $t$ of the  zero coupon bond with maturity $T$ is  
\begin{equation}\label{E71}
P(t,T)= \e^{m(T-t)-n(T-t)\overline r (t)},
\end{equation}
where $m$ and $n$ are deterministic functions. Moreover it is assumed that the short rate  $\overline r $ is a diffusion process 
\[
\d \overline r (t)= B(\overline r (t))\d t + \Sigma (\overline r (t))\left[ \lambda(\overline r(t))\d t + \d W(t)\right]. 
\]
In the notation of Section \ref{S4}, $Y=\overline r$. We assume that $B$,  $\Sigma$ and $\lambda$ satisfy Hypotheses \ref{A1} to \ref{A3}. Barski and Zabczyk \cite{Barski-Zabczyk2} have proved that in the affine models of bond prices are local martingales wrt. martingale measures if 
and only if $B(r)=a+br$ and ($\Sigma(r)=c\sqrt{r}$ or $\Sigma(r)=c$), where $a,b,c \in \mathbb{R}$ are constants. Obviously, \eqref{E71} requires additional assumptions on $\lambda$. Under the Musiela parametrization, see Appendix \ref{Appendix},  we have 
\[
P(t)(x)= \e^{m(x)-n(x)\overline r (t)}= F(t,t+x,\overline r(t)),
\quad 
F(t,t+x,r)=\e^{m(x)-n(x)r}, 
\]
and 
$$
r(t)(x)=-m'(x)+n'(x)\overline r(t).
$$

Since 
$$
\frac{\partial }{\partial r} \log F(t,t+x,r)=-n(x)
$$
the formula for the optimal investment strategy reads 
\begin{equation} \label{formula_affine}
\int_0^{T^*} n(x) \widehat  \psi_t(\d x) = \frac{1}{(\alpha -1)}  \frac{\lambda(\overline r(t))}{\Sigma (\overline r(t))}  -  \frac{D_rG(t,\overline r(t))}{G(t,\overline r(t))}.
\end{equation}
The optimal consumption rate is given by
\begin{equation} \label{cons_affine}
\widehat C(t)= G(t,\overline r(t))^{-1}.
\end{equation} 
\subsubsection{Vasicek model}\label{SVasicek}
First we will consider the so called \emph{Vasicek model}
\begin{equation} \label{Vasicek_model}
\d \overline r(t)= \left[\beta- \kappa \overline r(t)\right]\d t+ \sigma  \left[ \lambda \d t+ \d W(t)\right] ,
\end{equation}
where $b$, $\kappa$, $\lambda$, and $\sigma>0$ are constant.
Note, that under Vasicek model we have 
$$
n(x)= \frac{1-\e^{-\kappa x}}{\kappa},\quad 
m(x)= -\beta\int_0^x n(y)\d y +\frac{\sigma^2}{2} \int_0^x \left(n(u)\right)^2\d u,
$$
and 
$$
 \frac{\d U(t)(x)}{U(t)(x)}= \overline r(t)\d t- \sigma n(x)\left[ \lambda \d t + \d W(t)\right]. 
$$
\begin{proposition} \label{PVasicek} In the Vasicek model \eqref{Vasicek_model} the optimal pair $(\widehat{\psi},\widehat{C})$ is given by 
\eqref{formula_affine}--\eqref{cons_affine} where
\begin{equation}\label{E72}
G(t,r)=a^{\frac{\alpha}{1-\alpha}} \int_{t}^{T} \e^{ m_{1,t,u} +m_{2,t,u} r  +\frac{1}{2} \sigma_{t,u}^{2}} \d u  + b^{\frac{\alpha}{1-\alpha}} \e^{ m_{1,t,T} +m_{2,t,T} r +\frac{1}{2} \sigma_{t,T}^{2}},
\end{equation}
and
\begin{equation*}
\begin{aligned}
\sigma_{t,u}^{2}&:=\overline{\alpha}^2 \int_{t}^{u}n^2(k)\sigma^2 \d k ,\\
 m_{1,t,u}&:=\overline{\alpha}\int_{t}^{u}\left[n(k)\left( \beta +  \frac{1}{(1-\alpha)}   \lambda \sigma  \right) + \frac{1}{2(1-\alpha)} \lambda^2  -\gamma \right]\d k, \\
 m_{2,t,u}&:=\overline{\alpha} \,n(u-t),
\end{aligned}
\end{equation*}
$\overline {\alpha}:= \alpha/(1-\alpha)$. 
\end{proposition}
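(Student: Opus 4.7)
The Vasicek specification falls within the affine setting of \eqref{affine_spec} with $n=1$, $B_1(t)=\beta$, $B_2(t)=-\kappa$, $\Sigma(t)=\sigma$, $\lambda(t)=\lambda$, $\varphi_1(t)\equiv 0$, $\varphi_2(t)\equiv 1$; Hypotheses~\ref{A1}(ii), \ref{A2} and \ref{A3} are immediate, since the affine drift is globally Lipschitz, the constant $\sigma^2>0$ gives uniform ellipticity, and the constant $\lambda$ satisfies Novikov's criterion. The plan is therefore to invoke the affine proposition above and evaluate its coefficients in this one-dimensional setting.

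First I would compute the scalar fundamental solution of $\dot Y=-\kappa Y$, which is $P_{t,u}=\e^{-\kappa(u-t)}$, so that $P_{t,u}P_{k,u}^{-1}=\e^{-\kappa(k-t)}$. Plugging into \eqref{affine2} with $\varphi_2\equiv 1$ yields
\[
m_{2,t,u}=\overline\alpha\int_t^u \e^{-\kappa(k-t)}\,\d k=\overline\alpha\,\frac{1-\e^{-\kappa(u-t)}}{\kappa}=\overline\alpha\,n(u-t),
\]
which matches the claimed formula after the change of variable $k\mapsto u-k$.

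Next I would substitute the Gaussian ansatz $\eta(t,u,y)=\exp\bigl(m_{1,t,u}+m_{2,t,u}\,y+\tfrac12\sigma_{t,u}^2\bigr)$ into the homogeneous version of \eqref{E43} and match the coefficient of $y^0$. With $g_0=\tfrac{\alpha\lambda^2}{2(1-\alpha)^2}-\tfrac{\gamma}{1-\alpha}$ and $B_1=\beta$ this gives
\[
\partial_t\bigl(m_{1,t,u}+\tfrac12\sigma_{t,u}^2\bigr)=-\tfrac12\sigma^2 m_{2,t,u}^2-(\beta+\overline\alpha\sigma\lambda)\,m_{2,t,u}-g_0,
\]
to be integrated from $t$ to $u$ against the terminal data $m_{1,u,u}=\sigma_{u,u}^2=0$. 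The purely quadratic-in-$m_2$ contribution is then identified with the variance $\sigma_{t,u}^2$ via the Gaussian representation of $\int_t^u g(k,\widetilde Y(k))\,\d k$ under \eqref{E45}, and the remaining linear and constant pieces assemble the claimed expression for $m_{1,t,u}$; formula \eqref{E72} is just \eqref{affine1} transcribed.

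Finally, since $D_y\log F(t,t+x,r)=-n(x)$, the vector equation in Hypothesis~\ref{A4} collapses to the scalar relation \eqref{formula_affine}, and $\widehat C(t)=G(t,\overline r(t))^{-1}$ is inherited from Theorem~\ref{T41}(ii). The ratio $D_rG/G$ is a convex combination of the bounded quantities $m_{2,t,u}$ for $u\in[t,T]$, so Hypothesis~\ref{A4} holds whenever at least two distinct maturities are available (Remark~\ref{R43}). The main subtlety I expect is the bookkeeping that separates $m_1$ from $\tfrac12\sigma^2$: the ODE produced by the PDE substitution only pins down their sum, so one must invoke the Gaussian structure of $\widetilde Y$ explicitly to argue that the quadratic-in-$m_2$ integrand contributes precisely the variance and nothing else; once this is accepted, the announced split is forced and the remaining computation is a direct antiderivative.
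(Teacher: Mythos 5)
Your argument is correct, but it reaches the formulas by a different route than the paper. You specialize the general affine proposition: compute the propagator $P_{t,u}=\e^{-\kappa(u-t)}$, read off $m_{2,t,u}=\overline\alpha\,n(u-t)$ from \eqref{affine2}, then substitute the exponential ansatz into \eqref{E43} to get an ODE that determines only the sum $m_{1,t,u}+\tfrac12\sigma_{t,u}^2$, and finally appeal to the Gaussian law of $\int_t^u g(k,\widetilde Y(k))\,\d k$ to split mean from variance. The paper's own proof of Proposition \ref{PVasicek} is self-contained and purely probabilistic: it writes the Feynman--Kac representation of $G$ with the drift-adjusted Ornstein--Uhlenbeck process $\widetilde r$ from \eqref{E45}, uses the closed-form identity
\[
\int_{t}^{u}\widetilde r(k)\,\d k=n(u-t)\,r+\int_{t}^{u}n(k)\Bigl[\beta+\tfrac{1}{1-\alpha}\lambda\sigma\Bigr]\d k+\sigma\int_{t}^{u}n(k)\,\d W(k),
\]
and reads off $m_{1,t,u}$, $m_{2,t,u}$ and $\sigma_{t,u}^2$ directly from the moment generating function of this Gaussian variable. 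The probabilistic route delivers the mean and the variance separately at once, so the bookkeeping issue you rightly flag (the PDE pinning down only $m_1+\tfrac12\sigma^2$) never arises; your route, on the other hand, makes transparent how the Vasicek case sits inside the general affine machinery of Section \ref{S7} and requires no explicit integration of the OU path. Your verification of Hypotheses \ref{A1}--\ref{A3} and the reduction of Hypothesis \ref{A4} to \eqref{formula_affine} via $D_r\log F(t,t+x,r)=-n(x)$, with boundedness of $D_rG/G$ coming from the uniform boundedness of $m_{2,t,u}$, matches what the paper does implicitly. Both computations agree on the final formulas.
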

\begin{proof}
Note that equation \eqref{E43} for $G$ has now the following form 
$$
\begin{aligned}
\frac{\partial G}{\partial t}(t,r)+ \tilde L G(t,r) + g(r) G(t,r)  +a^{\frac{1}{1-\alpha }} &=0,\\
G(T,y)&= b^{\frac{1}{1-\alpha}}, 
\end{aligned}
$$
where 
$$
\tilde LG(t,r)= \frac{\sigma^2}{2} \frac{\partial^2 G}{\partial r^2}(t,r)+ \left[  \beta-\kappa r +\frac{\alpha}{(1-\alpha)}  \sigma \lambda\right] \frac{\partial G}{\partial r}(t,r) 
$$
and 
$$
g(r)= \frac{1}{1-\alpha}\left[\alpha r   + \frac{\alpha}{2(1-\alpha)}  \lambda^2   -\gamma\right]. 
$$
Thus 
$$
G(t,r) =\mathbb{E}\left\{ a^{\frac{1}{1-\alpha}}  \int_{t}^{T}\e^{  \int_{t}^{u}  g(\widetilde{r}(k))\d k}\d u + b^{\frac{1}{1-\alpha}}\e^{ \int_{t}^{T}  g(\tilde r(k))\d k} \right\},
$$
where
\[
\d \widetilde{r}(k)= \left[\beta- \kappa \widetilde{r}(k) + \frac{1}{(1-\alpha)}   \lambda \sigma   \right] \d k + \sigma \d W(k), \quad \widetilde{r}(t)=r. 
\]

Note, that
\[
\int_{t}^{u} \widetilde{r}(k)\d k= n(u-t)r + \int_{t}^{u}n(k)\left[ \beta +  \frac{1}{(1-\alpha)}   \lambda \sigma  \right]\d k + \int_{t}^{u}n(k)\sigma \d W(k).
\]
Therefore, the integral $\int_{t}^{u} \widetilde{r}(k)\d k$ is normally distributed, and in this particular case $G$ is given by \eqref{E72}. 
\end{proof}
Note that the optimal investment is determined by the condition
\[
\int_0^{T^*} \frac{1-\e^{-\kappa x}}{\kappa} \widehat  \psi_t(\d x) =  \frac{\lambda}{(\alpha -1)\sigma}  -  \frac{D_rG(t,\overline r(t))}{G(t,\overline r(t))},\quad \int_0^{T^*}  \widehat  \psi_t(\d x)=1. 
\]
In particular, we can take $\widehat\psi_t(\d x)= \widehat \eta_0(t)\delta_0(\d x)+ \widehat \eta_{\overline x}(t)\delta _{\overline x}(\d x)$, where $\overline x\in (0,T^*]$ is an arbitrary fixed time to maturity. The process $(\widehat \eta_0(t),\widehat\eta_{\overline x}(t))$ is determined by 
$$
\widehat  \eta_{\overline x}(t)=  \frac{\kappa }{(1-\e^{-\kappa \overline {x}})} \left[ \frac{\lambda}{(\alpha -1)\sigma}  -  \frac{D_rG(t,\overline r(t))}{G(t,\overline r(t))}\right],\quad \widehat \eta _0(t)=1-\widehat\eta_{\overline x}(t). 
$$

It should be noted that the solution for the simpler Merton model 
\[
\d \overline  r(t)= \beta \d t+ \sigma    \left(\lambda \d t+ \d W(t)\right)
\]
can be derived  by passing in $\kappa \to 0$ in the Vasicek model. Thus, in the Merton model, $n(x)=x$ and $m(x)=-\beta x^2 + (\sigma^2/6) x^3$. In particular, the condition for the optimal investment has the form 
$$
\int_0^{T^*}x  \widehat \psi_t(\d x)= \frac{\lambda}{(\alpha -1)\sigma}  -  \frac{D_rG(t,\overline r(t))}{G(t,\overline r(t))},\quad \int_0^{T^*}  \widehat  \psi_t(\d x)=1. 
$$

\subsubsection{CIR model}\label{SCIR}
%\begin{example} In the \emph{Merton model} 
%\begin{align*}
%\d \overline  r(t)&= \mu \d t+ \sigma   \d W^*(t),\\
%n(x)&= x,  \\
%m(x)&= -\frac{\mu }2x^2+\frac{\sigma ^2}6x^3,\\
% \frac{\d U(t)(x)}{U(t)(x)}&= \overline r(t)\d t -  x \sigma \d W^*(t). 
%\end{align*}
%\end{example}
The second important model worth to consider is the Cox--Ingersoll--Ross model. It does not satisfy our Hypotheses \ref{A1} and  \ref{A2}. However, we  are able  to perform some explicit calculations. In fact we will derive an exact formula for the solution  $G$ to equation \eqref{E43}. In this way we obtain a candidate for the value function for the corresponding control problem.    We perform calculations only for the case $\alpha <0$,  see however Remark \ref{R}. For other parameters it might happen that the value function has infinite value  (see for example Korn and Kraft \cite[Proposition 3.2]{Kraft}). We start by introducing elementary facts about the CIR model
\begin{align*} 
\d \overline r(t)&= (\beta-\kappa \overline  r(t))\d t+ \sigma\sqrt{\overline r(t)} \left(\lambda \d t + \d W(t)\right),\\
n(x)&= \frac{\sinh \rho x}{\rho\cosh \rho x +\frac{\kappa}{2} \sinh \rho x}, \\
m(x)&= \frac{2\beta}{\sigma}\log \left( \frac{\e^{\frac{\kappa x}{2}}}{\rho \cosh \rho x +\frac{\kappa}{2} \sinh \rho x}\right), 
\\
\rho &= \frac 12 \left(\kappa ^2 +2\sigma^2\right)^{1/2}, \\
 \frac{\d U(t)(x)}{U(t)(x)}&=  \overline r(t)\d t -\sigma \sqrt{\overline r(t)} n(x) \left(\lambda \d t +  \d W(t)\right). 
 \end{align*}
Usually it is assumed that $2\beta\ge \sigma^2$. To obtain closed form solution we assume here $\lambda(r):=\lambda \sqrt{r}$.

Note that we have
\[
\d \widetilde{r}(k)= \left[\beta- \widetilde{\kappa} \widetilde{r}(k)  \right] \d k + \sigma \sqrt{\widetilde{r}(k) } \d W(k),\quad \widetilde {r}(t)=r, \quad \widetilde{\kappa}=\kappa - \frac{\lambda \sigma}{(1-\alpha)}  
\]
and 
\[
\mathbb{E}\,\e^{\int_{t}^{u}\frac{\alpha}{1-\alpha}\left[1+\frac{ \lambda^2}{2(1-\alpha)}\right] \widetilde{r}(k) \d k}= \e^{\widetilde{m}(u-t) - \widetilde{n}(u-t) r},
\]
where
\begin{align*}
\widetilde{n}(x)&=\frac{|\alpha|}{1-\alpha}\left[1+\frac{ \lambda^2}{2(1-\alpha)}\right]  \frac{\sinh \widetilde{\gamma} x}{\widetilde{\gamma} \cosh \widetilde{\gamma} x +\frac{\widetilde{\kappa}}{2} \sinh \widetilde{\gamma} x},\\
\widetilde{m}(x)&= \sqrt{\frac{|\alpha|}{(1-\alpha)}\left[1+\frac{ \lambda^2}{2(1-\alpha)}\right] }  \frac{2\beta}{\sigma}\log \left( \frac{\e^{\frac{\widetilde{\kappa} x}{2}}}{\widetilde{\gamma} \cosh \widetilde{\gamma} x +\frac{\widetilde{\kappa}}{2} \sinh \widetilde{\gamma} x}\right),
\\
\widetilde{\gamma} &= \frac 12 \left(\widetilde{\kappa}^2 +2\frac{|\alpha|}{(1-\alpha)}\sigma^2\right)^{1/2}.\\
\end{align*}

So the solution  to equation \eqref{E43} corresponding to CIR model  is given by  
\begin{equation}\label{E74}
G(t,r)=a^{\frac{\alpha}{1-\alpha}} \int_{t}^{T} \e^{ m_{1,t,u} +m_{2,t,u} r} \d u + b^{\frac{\alpha}{1-\alpha}} \e^{ m_{1,t,T} +m_{2,t,T} r },
\end{equation}
where
$$
 m_{1,t,u}:=\widetilde{m}(u-t)  - \frac{\gamma}{1-\alpha}  (u-t), \qquad  m_{2,t,u}:=\widetilde{n}(u-t). 
$$
Summing up, we get
 
\begin{proposition} In the CIR model a candidate for the optimal pair $(\widehat{\psi},\widehat{C})$ is given by 
\begin{equation*} 
\int_0^{T^*} \widetilde{n}(x) \widehat  \psi_t(\d x) = \frac{1}{(\alpha -1)}  \frac{\lambda(\overline r(t))}{\Sigma (\overline r(t))}  -  \frac{D_rG(t,\overline r(t))}{G(t,\overline r(t))}, \quad \widehat C(t)= G(t,\overline r(t))^{-1},
\end{equation*}
 where $G(t,r)$ is given by \eqref{E74}.
\end{proposition}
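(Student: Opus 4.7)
The plan is to mirror the Vasicek derivation of Proposition \ref{PVasicek}, exploiting the affine structure of the CIR specification, while keeping in mind that Hypotheses \ref{A2} and \ref{A3} are violated and so only a candidate pair can be produced. First I would specialize equation \eqref{E43} to the one-dimensional CIR setting: with $\Sigma(r)=\sigma\sqrt{r}$ and $\lambda(r)=\lambda\sqrt{r}$, the drift correction $\frac{\alpha}{1-\alpha}\Sigma\lambda$ entering the first-order term and the quadratic $\|\lambda\|^{2}$ appearing in $g$ via \eqref{E42} both become linear in $r$. Equation \eqref{E43} then becomes a linear parabolic PDE with affine-in-$r$ coefficients, associated with the shifted CIR diffusion
$$
\d\widetilde r(k) = [\beta - \widetilde\kappa\,\widetilde r(k)]\,\d k + \sigma\sqrt{\widetilde r(k)}\,\d W(k), \qquad \widetilde\kappa := \kappa - \lambda\sigma/(1-\alpha),
$$
already recorded in the passage preceding the proposition.

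Next I would write the Feynman--Kac representation for $G$ of the same shape as \eqref{E44}, driven by $\widetilde r$ with potential $g(r) = \frac{1}{1-\alpha}\bigl[\alpha r + \frac{\alpha \lambda^{2}r}{2(1-\alpha)} - \gamma\bigr]$. The core computation is then the classical Laplace transform of the integrated CIR process,
$$
\mathbb{E}\exp\left(-c\int_{t}^{u}\widetilde r(k)\,\d k\right) = \exp\bigl(\widetilde m(u-t) - \widetilde n(u-t)\, r\bigr),
$$
for the specific constant $c = -\frac{\alpha}{1-\alpha}\bigl[1 + \lambda^{2}/(2(1-\alpha))\bigr]$, which is positive precisely because $\alpha<0$. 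The functions $\widetilde n$ and $\widetilde m$ are the unique solutions of the Riccati system attached to this Laplace transform; a direct computation yields the explicit trigonometric-hyperbolic expressions stated in the passage. Substituting back and separating the contribution $-\gamma/(1-\alpha)$ from $g$ gives the closed-form expression \eqref{E74} for $G$.

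With $G$ in hand, the candidate optimal pair is read off from Theorem \ref{T41}(ii) via the short-rate affine specialization \eqref{formula_affine}--\eqref{cons_affine}. From the bond price \eqref{E71} we have $D_{r}\log F(t,t+x,r) = -n(x)$, so \eqref{E35} gives $A(\psi)(t,r) = -\sigma\sqrt{r}\int_{0}^{T^{*}} n(x)\psi(\d x)$; equating this to the right-hand side of \eqref{E47}, dividing by $-\sigma\sqrt{r}$, and using $\lambda(r)/\Sigma(r)=\lambda/\sigma$ produces the claimed implicit equation for $\widehat\psi_t$, while $\widehat C(t) = G(t,\overline r(t))^{-1}$ is immediate from the same theorem.

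The main obstacle, and the reason the proposition only asserts a candidate, is that the CIR diffusion violates Hypothesis \ref{A2} (the coefficient $\sigma\sqrt{r}$ is unbounded, non-Lipschitz, and degenerate at the origin) and the boundedness part of Hypothesis \ref{A3}, so Theorem \ref{T41} does not apply literally. Upgrading the candidate to a proven optimum would require, in this non-elliptic square-root setting, finiteness of the Feynman--Kac expectation (which forces the restriction $\alpha<0$ together with the Feller condition $2\beta\ge \sigma^{2}$ and a size restriction on $\lambda$), $C^{1,2}$-regularity of $G$, admissibility of the induced strategy, and a verification argument in which the exponential integrability condition \eqref{uniform} is replaced by CIR-specific exponential-moment bounds applied along a suitable localization sequence. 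This rigorous adaptation, rather than the explicit Laplace computation, is the genuinely delicate point.
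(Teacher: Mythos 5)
Your proposal is correct and follows essentially the same route as the paper: tilt the CIR drift to $\widetilde{\kappa}=\kappa-\lambda\sigma/(1-\alpha)$, evaluate the exponential functional of the integrated CIR process via the standard Riccati/Laplace-transform formulas to obtain \eqref{E74}, and read the candidate pair off the short-rate affine formulas \eqref{formula_affine}--\eqref{cons_affine}. (Your derivation naturally produces $n(x)$ rather than the $\widetilde{n}(x)$ printed in the proposition, which appears to be a typo in the paper, and your closing discussion of exactly which hypotheses fail and what a full verification would require is more explicit than the paper's brief remark.)
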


\subsubsection{Multidimensional model}
It is right time to present a certain multidimensional model. Here we focus on  the \emph{G2++} model, which is important from the perspective of applications. Let 
$\overline r(t) = Y_{1}(t)+Y_{2}(t)$,
where, 
\begin{align*} 
\d Y_{1}(t)&=-\kappa_{1}Y_{1}(t) \d t + \sigma_{1}\left[ \lambda_1\d t + \d W_1(t)\right],  \\
\d Y_{2}(t)&=-\kappa_{2}Y_{2}(t) \d t + \sigma_{2}\left[ \rho \left[ \lambda_1 \d t + \d W_1(t)\right] + \sqrt{1-\rho^{2}}\left[ \lambda_2 \d t + \d W_2(t)\right]\right], \label{g23}
\end{align*}
 and  $W_{1}$,$W_{2}$ are independent Brownian motions.  In other words, the short rate is a sum of two correlated Vasicek (or Ornstein--Uhlenbeck) processes. 

\begin{proposition} In the G2++ model  the optimal pair $(\widehat{\psi},\widehat{C})$ is given by
\[
-\int_0^{T^*} (n_{1}(x),n_{2}(x))  \widehat {\psi}_t(\d x) = \frac{(\lambda_1,\lambda_2)\Sigma^{-1}}{1-\alpha}  +  \frac {D_{y} G(t,Y(t))}{G(t,Y(t))}, 
\]
$\widehat C(t)= G(t,Y(t))^{-1}$,  where 
\begin{align*}
G(t,y)&=a^{\frac{1}{1-\alpha}}\int_{t}^{T} \eta(t,u,y) \d u + b^{\frac{1}{1-\alpha}} \eta(t,T,y),  \\ 
\eta(t,u,y)&=  \e^{ m_{1,t,u} +\langle m_{2,t,u}, y\rangle +\frac{1}{2} \sigma_{t,u}^{2}},\\
\sigma_{t,u}^{2}&:=\left[\frac{\alpha}{1-\alpha}\right]^{2}\left[\int_{t}^{u}(n_{1}(k)\sigma_{1} +n_{2}(k) \sigma_{2}\rho)^2 +n_{2}^{2}(k)\sigma_{2}^{2} (1-\rho^{2})  \right] \d k,\\
 m_{1,t,u}&:= \biggl[\frac{\alpha}{1-\alpha}\biggr] \int_{t}^{u} \biggl[n_{1}(k)\beta_1+ n_{2}(k)\beta_2 + \frac{\alpha}{2(1-\alpha)} \|\lambda\|^2 + \varphi  -\gamma \biggr]\d k, \\
 m_{2,t,u}&:=\left(\frac{\alpha}{1-\alpha} n_{1}(u-t),\frac{\alpha}{1-\alpha}n_{2}(u-t)\right),\\
\Sigma &:= \left[ {\begin{array}{cc}
   \sigma_{1} & 0\\
   \rho \sigma_{2} & \sigma_{2}\sqrt{1-\rho^{2}} \\
  \end{array} } \right],  
\end{align*}
and 
$$ 
n_{1}(x):= \frac{1-\e^{-\kappa_{1} x}}{\kappa_{1}},\quad 
n_{2}(x):= \frac{1-\e^{-\kappa_{2} x}}{\kappa_{2}}.
$$
\end{proposition}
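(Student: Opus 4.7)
The plan is to recognize the G2++ model as a specific instance of the affine factor framework of Section \ref{S7} and read off the claimed formulas from the general affine Proposition proved there. The coefficients in the affine specification \eqref{affine_spec} are $B_{2}(t)=-\mathrm{diag}(\kappa_{1},\kappa_{2})$, $\Sigma(t)=\Sigma$ (the displayed constant matrix), $\lambda(t)=(\lambda_{1},\lambda_{2})^{\top}$, and the short rate is $\varphi(t,y)=y_{1}+y_{2}$, so $\varphi_{2}=(1,1)^{\top}$ (constant drift components $\beta_{1},\beta_{2}$ and a constant $\varphi$, if included, go into $B_{1}$ and $\varphi_{1}$ respectively). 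Since $\Sigma$ is constant with $\det\Sigma=\sigma_{1}\sigma_{2}\sqrt{1-\rho^{2}}\neq 0$, Hypothesis \ref{A2} holds; Hypothesis \ref{A1} is immediate because $B$ and $\varphi$ are affine, and Hypothesis \ref{A3} follows from the boundedness of $\lambda$ via Novikov's criterion.

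The core computation is explicit because $B_{2}$ is diagonal and time-independent: the propagator is $P_{t,u}=\mathrm{diag}(\e^{-\kappa_{1}(u-t)},\e^{-\kappa_{2}(u-t)})$, so $P_{k,u}P_{s,u}^{-1}\varphi_{2}=(\e^{-\kappa_{1}(s-k)},\e^{-\kappa_{2}(s-k)})^{\top}$, and integration over $s\in[k,u]$ returns $m_{2,k,u}=\tfrac{\alpha}{1-\alpha}(n_{1}(u-k),n_{2}(u-k))^{\top}$, which is the stated $m_{2,t,u}$ after the change of variable. Because $\widetilde Y$ from \eqref{E45} has constant volatility $\Sigma$ and an affine drift, and $g(k,y)=g_{0}(k)+\langle g_{1}(k),y\rangle$ is affine, the random variable $\int_{t}^{u} g(k,\widetilde Y(k))\,\d k$ is Gaussian; computing its Laplace transform as in the general affine Proposition reduces the Feynman--Kac representation \eqref{E44} to the product $\eta(t,u,y)=\exp(m_{1,t,u}+\langle m_{2,t,u},y\rangle+\tfrac{1}{2}\sigma_{t,u}^{2})$. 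The variance $\sigma_{t,u}^{2}$ is produced by substituting $m_{2,\cdot,u}$ into $\langle m_{2,k,u},\Sigma\Sigma^{\top}m_{2,k,u}\rangle$ and using the entries $(\Sigma\Sigma^{\top})_{11}=\sigma_{1}^{2}$, $(\Sigma\Sigma^{\top})_{12}=\rho\sigma_{1}\sigma_{2}$, $(\Sigma\Sigma^{\top})_{22}=\sigma_{2}^{2}$, which yields the combination $(n_{1}\sigma_{1}+n_{2}\sigma_{2}\rho)^{2}+n_{2}^{2}\sigma_{2}^{2}(1-\rho^{2})$ under the integral.

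With $G$ identified, I verify Hypothesis \ref{A4}. The logarithmic gradient $D_{y}G/G$ is a convex combination in $u\in[t,T]$ of the bounded vectors $m_{2,t,u}$, hence globally bounded on $[0,T]\times\mathbb{R}^{2}$. For the existence of an optimal strategy with point-mass support, Remark \ref{R43} reduces matters to exhibiting three maturities $x_{1},x_{2},x_{3}\in[0,T^{*}]$ for which the augmented matrix with columns $\Sigma^{\top}(n_{1}(x_{j}),n_{2}(x_{j}))^{\top}$ stacked above the all-ones row has rank $3$; this holds precisely when $1,n_{1}(\cdot),n_{2}(\cdot)$ are linearly independent on $[0,T^{*}]$, which is immediate from $\kappa_{1}\neq\kappa_{2}$. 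From the affine bond pricing formula one has $D_{y}\log F(t,t+x,y)=-(n_{1}(x),n_{2}(x))^{\top}$; substituting this into the definition \eqref{E35} of $A(\psi)$, writing out the first-order condition \eqref{E47} of Theorem \ref{T41}, and multiplying by $(\Sigma^{\top})^{-1}$ on the left gives the displayed equation for $\widehat\psi_{t}$, while $\widehat C(t)=G(t,Y(t))^{-1}$ is the direct verdict of Theorem \ref{T41}.

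The only non-routine step I anticipate is the bookkeeping required to identify how the scalar combinations $n_{1}(k)\beta_{1}+n_{2}(k)\beta_{2}$ and the cross-correlation term $\rho$ enter $m_{1,t,u}$ and $\sigma_{t,u}^{2}$ through $\langle B_{1},m_{2,k,u}\rangle$ and $\Sigma\Sigma^{\top}$, and to check that the sign coming from $D_{y}\log F$ is correctly absorbed into the left-hand side of the first-order condition. Beyond this calculation there is no conceptual obstruction: once the model is cast in affine form, the statement is a direct specialization of the general affine Proposition.
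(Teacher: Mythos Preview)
Your proposal is correct and follows essentially the same route as the paper: both identify $G$ via the Feynman--Kac representation \eqref{E44} and exploit that $\int_t^u g(k,\widetilde Y(k))\,\d k$ is Gaussian to compute $\eta(t,u,y)$ in closed form. The only cosmetic difference is that the paper re-derives the integrated Ornstein--Uhlenbeck formula directly (borrowing the Vasicek computation from Section~\ref{SVasicek}) rather than reading off $m_{2,t,u}$, $m_{1,t,u}$, $\sigma_{t,u}^2$ from the general affine Proposition; your explicit verification of Hypothesis~\ref{A4} is an addition the paper leaves implicit.
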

\begin{proof}
Here the formula for the function $G$ looks as follows
\[
G(t,y) =\mathbb{E} \left\{  \int_{t}^{T}a^{\frac{1}{1-\alpha}}\e^{  \int_{t}^{u}  g(\widetilde{Y}_{1}(k) +\widetilde{Y}_{2}(k))\d k} \d u + b^{\frac{1}{1-\alpha}}\e^{  \int_{t}^{T}  g(\widetilde{Y}_{1}(k) +\widetilde{Y}_{2}(k))\d k}  \right\},
\]
where
$$
 g(z) = \frac{1}{1-\alpha}\left[\alpha z + \frac{\alpha}{2(1-\alpha)} \| (\lambda_1,\lambda_2)\|^2  -\gamma\right]
$$
and 
\begin{align*}
\d \widetilde{Y}_{1}(u)&=\left[ \beta_{1} -\kappa_{1}\widetilde{Y}_{1}(u) \right]\d u + \sigma_{1}\d W_{1}(u), \\
\widetilde{Y}_{1}(t)&=y_1,\\
\d \widetilde{Y}_{2}(u)&=\left[ \beta_2-\kappa_{2}\widetilde{Y}_{2}(u) \right] \d t +\sigma_{2}(\rho  \d W_{1}(u) + \sqrt{1-\rho^{2}}\d W_{2}(u)),\\
\widetilde{Y}_{2}(t)&=y_2. 
 \end{align*}
$$
\beta_1:= \frac{1}{(1-\alpha)}   \lambda_{1} \sigma_{1}, \qquad \beta_2:= \frac{1}{(1-\alpha)} \left( \lambda_{1} \sigma_{2}\rho + \lambda_{2} \sigma_{2} \sqrt{1-\rho^{2}}\right). 
$$

Taking the advantage of  the Vasicek model  we obtain 
\begin{multline*}
\int_{t}^{u} (\widetilde{Y}_{1}(k) +\widetilde{Y}_{2}(k))\d k = n_{1}(u-t)y_{1} +  n_{2}(u-t)y_{2} +  \int_{t}^{u}n_{1}(k)\beta_1\d k \\  + \int_{t}^{u}n_{2}(k)\beta_2\d k + \int_{t}^{u}(n_{1}(k)\sigma_{1} +n_{2}(k) \sigma_{2}\rho) \d W_{1}(k) \\+   \int_{t}^{u}n_{2}(k)\sigma_{2} \sqrt{1-\rho^{2}} \d W_{2}(k).
\end{multline*}
This implies the desired identities.
\end{proof}

In particular, if we take $$\widehat\psi_t(\d x)= \widehat \eta_0(t)\delta_0(\d x)+ \widehat \eta_{ x_{1}}(t)\delta _{x_{1} }(\d x)+\widehat \eta_{ x_{2}}(t)\delta _{x_{2} }(\d x),$$ where \\ $ x_{1}, x_{2} \in (0,T^*]$, then the process $(\widehat \eta_0(t),\widehat \eta_{ x_{1}}(t)),\widehat \eta_{ x_{2}}(t))$ is determined by 
\begin{align*}
(\widehat \eta_{ x_{1}}(t),\widehat \eta_{ x_{2}}(t))&=  \frac{(\lambda_1,\lambda_2)\Sigma^{-1}M(x_{1},x_{2})^{-1}}{\alpha-1}  -  \frac {D_{y} G(t,Y(t))M(x_{1},x_{2})^{-1}}{G(t,Y(t))},\\ \widehat \eta _0(t)&=1-\widehat \eta_{x_{1}}(t)-\widehat \eta_{x_{2}}(t), \notag
\end{align*}
where
\[
M(x_{1},x_{2}):=\left[ {\begin{array}{cc}
   n_{1}(x_{1}) & n_{1}(x_{2})\\
   n_{2}(x_{1}) & n_{2}(x_{2}) \\
  \end{array} } \right].
\]

\section{Infinite horizon problem}\label{S8}

Recall that in the infinite horizon case the reward functional is given by \eqref{E38}, the short rate has  the form $\overline r(t)= \varphi(Y(t))$ where $Y$ solves \eqref{E31}.  We assume that Hypothesis \ref{A1} to \ref{A3} are fulfilled. Moreover, in this section  $\varphi$ as well as the coefficients $B, \Sigma$ and $\lambda$  appearing in equation \eqref{E31} do not depend on time variable.  

Let 
$$
V(z,y)=\sup_{\psi, C} J(z,y,\psi,C)
$$
be the value function of the investor. 

We start by providing a simple example of the model that satisfies our  Hypotheses  \ref{A1} to \ref{A3},  but produces infinite value function $V$. 
\begin{example}\label{Ex81}
Consider the  Merton model 
\[
\d \overline r(t) = \beta  \d t + \sigma \left( \lambda \d t + \d W(t)\right).
\]
Choose $\psi=\delta_{0}$, which corresponds  to the investment in the bank account only,  and fix  the consumption at a constant level $c> 0$. Then  $C(t)\equiv c$ and 
\[
\d z^{\psi,C}(t)=\left[\overline r(t) - c\right]z^{\psi,C}(t) \d t.
\]
Thus 
$$
z^{\psi,C}(t)= z\e^{\int_0^t \left[ \overline r(u)-c\right]\d u},
$$
and 
\[
J(z,r,\psi,C) = \frac{cz}{\alpha} \, \mathbb{E} \int_{0}^{+\infty} \e^{\int_{0}^{t} \left[\alpha \overline r(u)-\alpha c-\gamma\right] \d u}\d t. 
\]
Note that 
\[
\int_0^t \overline{r}(u)\d u = \frac{(\beta +\lambda) t^2}{2}  + \sigma \int_0^t W(u)\d u. 
\]
Since $\int_{0}^{t} W(u) \d u$ has the normal distribution with variance  $\frac{t^{3}}{3}$ we have   $J(z,r,\psi,C)=+\infty$, see Synowiec \cite{Synowiec}.
\end{example}

The HJB approach gives the following candidate for the value function $V$, optimal consumption $\widehat C$ and investment strategy $\widehat \psi$. Namely, we can expect that 
\begin{align}\label{E81}
V(z,y)&= G(y)^{\frac{1}{1-\alpha}}z^\alpha, \\
\widehat C(t)&= G(Y(t))^{-1}, \label{E82}
\end{align}  
and  $\widehat\psi\in \mathcal{M}_{+\infty}$ is  such that  
\begin{equation} \label{E83}
A(\widehat{\psi}_t)=   \frac{\lambda(Y(t))}{1-\alpha}  +   \Sigma^{\top} (Y(t))\frac{D_y G(Y(t))}{G(Y(t))}, 
\end{equation}
where $G$ solves the following elliptic equation
\begin{equation} \label{E84}
LG(y)+\frac{\alpha}{(1-\alpha)}  \langle \Sigma (y)\lambda (y), D_yG(y)\rangle + g(y) G(y) + 1=0.  
\end{equation}
Recall that $L$ given by \eqref{E41} is the generator of the diffusion defined by \eqref{E31} and $g$ is given by \eqref{E42}.  Finally, we can expect that $G$ has the following Feynman--Kac representation
\begin{equation} \label{E85}
G(y) =\mathbb{E}  \, \int_{0}^{+\infty}\e^{ \int_{0}^{u}  g(\widetilde{Y}(k))\d k}\d u,
\end{equation}
where $\widetilde{Y}$ solves the stochastic differential equation 
$$
\d  \widetilde{Y}(t)= \left[B(\widetilde{Y}(t)) + \frac{1}{(1-\alpha)}   \Sigma( \widetilde{Y}(t))   \lambda( \widetilde{Y}(t)) \right] \d t+ \Sigma(\widetilde{Y}(t)) \d W(t)
$$
with  $\widetilde {Y}(0)=y$.  Taking into account Example \ref{Ex81}, we see that a special care have to be taken to ensure the existence of a solution to \eqref{E85} or to the convergence of the  integral in \eqref{E85}.  Moreover, at the end we will have to verify the assumption of the verification theorem. 

Below we present general theorem which ensures such convergence. Let $h\colon \mathbb{R}^n\mapsto \mathbb{R}$ be a Lipschitz continuous function. Let us consider the elliptic equation of the form
\begin{equation}\label{E86}
LG(y) + h(y)G+ 1 =0, \quad \quad y \in \mathbb{R}^{n}.
\end{equation}

\begin{theorem} \label{maininfini}
In addition to Hypotheses \ref{A1} to \ref{A3} suppose that there exists a constant $L_{2}>0$ and a function  $\kappa\colon [0,+\infty) \times \mathbb{N} \to \mathbb{R} $ decreasing in the first argument, such that
\[
\langle B(x)- B(y), x-y\rangle  + \frac{1}{2} \|\Sigma(x)- \Sigma(y)\|^2 \leq- L_{2}\|x-y\|^{2},
\]
and 
\begin{equation} \label{E87}
\mathbb{E} \, \e^{\int_{0}^{t} 2h(Y(u)) du}  \le \kappa(t,n),   \quad y \in B(0,n),  \quad \int_{0}^{+ \infty} \sqrt{t\kappa(t,n)} \d t <+\infty.
\end{equation}
Then
\[
G(y):=\mathbb{E} \int_{0}^{+\infty} \e^{\int_{0}^{t}h(Y(u)) \d u} \d t 
\]
is a classical, $C^2(\mathbb{R}^n)$,  solution to \eqref{E86}.

\end{theorem}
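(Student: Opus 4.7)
The plan is to realize $G$ as the limit of the finite-horizon truncations
$$G_T(y) := \mathbb{E}\int_0^T \e^{\int_0^t h(Y^y(u))\,\d u}\,\d t,$$
where $Y^y$ is the diffusion \eqref{E31} started at $y$, and to pass to the limit in the parabolic Cauchy problem these functions solve. For each fixed $T$, by time homogeneity and the Feynman--Kac formula, the function $u(s,y):=G_{T-s}(y)$ is a classical $C^{1,2}([0,T]\times\mathbb{R}^n)$ solution of the backward problem $\partial_s u+Lu+hu+1=0$, $u(T,\cdot)=0$, which exists under Hypotheses \ref{A1}--\ref{A3} by the standard parabolic theory (see Friedman \cite{Friedman}). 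Consequently $G_T\in C^2(\mathbb{R}^n)$ and satisfies $LG_T(y)+h(y)G_T(y)+1=\partial_T G_T(y)=\mathbb{E}\,\e^{\int_0^T h(Y^y(u))\d u}$.

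The heart of the argument is to show that $G_T$, $D_yG_T$ and $D^2_{yy}G_T$ converge, uniformly on compact sets, to $G$, $D_yG$ and $D^2_{yy}G$ as $T\to\infty$, with $G$ given by the Feynman--Kac integral of the theorem. For this I differentiate under the expectation to obtain, e.g.,
$$D_y G_T(y)=\mathbb{E}\int_0^T \e^{\int_0^t h(Y^y(u))\d u}\Bigl(\int_0^t \nabla h(Y^y(u))^\top \eta(u)\,\d u\Bigr)\d t,$$
where $\eta(t):=D_yY^y(t)$ solves the variational SDE. The dissipativity hypothesis gives the exponential decay $\mathbb{E}\|\eta(t)\|^2\le \e^{-2L_2 t}$, and Cauchy--Schwarz in the inner integral combined with the integrability assumption \eqref{E87} yields
$$|D_yG(y)|\le \mathrm{Lip}(h)\int_0^{+\infty}\!\sqrt{\kappa(t,n)}\sqrt{t\int_0^t \e^{-2L_2 u}\,\d u}\,\d t\le C\int_0^{+\infty}\!\sqrt{t\,\kappa(t,n)}\,\d t<\infty.$$
This is precisely where the weight $\sqrt{t}$ appearing in \eqref{E87} is used: the $\sqrt{t}$ comes from applying Cauchy--Schwarz to the time integral $\int_0^t$, while $\sqrt{\kappa(t,n)}$ comes from Cauchy--Schwarz in the probability space with the exponential factor. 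The same scheme, but applied to the second variation process (which also decays exponentially thanks to dissipativity), provides the bound for $D^2_{yy}G$, and the same Cauchy--Schwarz split applied to the tail $\int_T^{+\infty}$ yields the uniform-on-compacts convergence $G_T\to G$ together with its derivatives.

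Once $G\in C^2(\mathbb{R}^n)$ and the derivatives pass to the limit, I conclude by letting $T\to\infty$ in the identity $LG_T+hG_T+1=\partial_T G_T(y)=\mathbb{E}\,\e^{\int_0^T h(Y^y(u))\d u}$. The left-hand side converges pointwise to $LG+hG+1$. For the right-hand side, Cauchy--Schwarz gives $\mathbb{E}\,\e^{\int_0^T h(Y^y(u))\d u}\le \sqrt{\kappa(T,n)}$, and since $\kappa$ is decreasing and $\int_0^{+\infty}\sqrt{t\,\kappa(t,n)}\d t<\infty$ the function $t\mapsto\sqrt{\kappa(t,n)}$ must tend to zero; hence $\partial_TG_T(y)\to 0$ and $G$ solves \eqref{E86}.

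The main obstacle is the uniform control of $D^2_{yy}G_T$: the second variation of the stochastic flow involves products $\eta(u)\otimes\eta(v)$ and the Hessian $D^2 h$, so one has to estimate a double time integral and a square of a stochastic integral. Bounding these by the dissipativity estimate for the first variation, the Lipschitz regularity of $\nabla h$, and Hypothesis \ref{A2}, one again produces a factor of order $t$ under the square root, which is exactly what \eqref{E87} absorbs. Any weakening of the $\sqrt{t\,\kappa}$ integrability (say, to $\sqrt{\kappa}$) would break the second-derivative estimate, which is the reason for the precise form of the hypothesis.
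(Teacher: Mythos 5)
Your overall scheme (finite-horizon truncation $G_T$, Feynman--Kac for the parabolic problem, then passage to the limit $T\to\infty$) coincides with the paper's, and your first-order estimates are essentially the ones used there: the bound $\partial_T G_T(y)=\mathbb{E}\,\e^{\int_0^T h(Y^y(u))\d u}\le\sqrt{\kappa(T,n)}\to 0$, and the gradient bound obtained from the dissipativity estimate $\mathbb{E}\|Y(t;y_1)-Y(t;y_2)\|^2\le M\e^{-2L_2t}\|y_1-y_2\|^2$ together with Cauchy--Schwarz, which is exactly where the weight $\sqrt{t\,\kappa(t,n)}$ enters. (One small correction: the paper derives the gradient bound from difference quotients and the Lipschitz constant of $h$ via $|\e^a-\e^b|\le \e^{\max(a,b)}|a-b|$, not by writing $\nabla h$ under the integral.)

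The genuine gap is your treatment of the second derivatives. You propose to differentiate twice under the expectation, using the second variation of the stochastic flow, the "Hessian $D^2h$" and the "Lipschitz regularity of $\nabla h$". None of this is available under the stated hypotheses: $h$ is only assumed Lipschitz (so $D^2h$ need not exist and $\nabla h$ is only defined a.e.\ with no modulus of continuity), and $B$, $\Sigma$ are only Lipschitz in $y$ (Hypotheses \ref{A1}--\ref{A2}), so the flow $y\mapsto Y^y(t)$ need not be twice differentiable; moreover, exponential decay of the second variation would require a stronger spectral condition than the one-sided dissipativity inequality assumed. The paper avoids this entirely: having only (i) a uniform bound on $\partial_t G$ tending to zero, (ii) a uniform bound on $D_yG(t,\cdot)$, and (iii) a Lipschitz-in-$y$ bound on $\partial_t G$ of order $\sqrt{\kappa(t,n)}$, it rewrites the parabolic equation as the elliptic equation $L G(t,\cdot)+hG(t,\cdot)+1=\partial_tG(t,\cdot)$ with controlled right-hand side and invokes interior Schauder estimates (Gilbarg--Trudinger, Theorem 6.2) to extract a subsequence $G(t_n,\cdot)$ converging in $C^2$ on compact sets to a classical solution of \eqref{E86}. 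You should replace your flow-based Hessian estimate by this elliptic-regularity compactness argument (or add the missing smoothness assumptions on $h$, $B$, $\Sigma$, which would prove a weaker theorem than the one stated).
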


\begin{proof}
Consider the parabolic problem
\[
\frac{\partial  G}{\partial t}(t,y) -  LG(t,y) - h(y)G(t,y)- 1=0, \qquad G(0,y)=0. 
\]
We have 
\[
G(t,y)= \mathbb{E} \int_{0}^{t} \e^{\int_{0}^{s} h(Y(k))\d k} \d s, \qquad Y(0)=y.
\]
Note, that 
\[
\frac{\partial G}{\partial t}(t,y)=\mathbb{E}\, \e^{\int_{0}^{t}h(Y(k))\d k} \le  \left[\mathbb{E}\, \e^{\int_{0}^{t}2h(Y(k))\d k}\right]^{\frac{1}{2}}.
\]
Therefore, by \eqref{E87},   $\frac{\partial G}{\partial t}$  converges to 0 as $t \to +\infty$,   uniformly on each ball $B(0,n)$.

Secondly we need the estimate for the Lipschitz constant of the function $G$ in $y$. Let $Y(\cdot;y)$ be the solution to SDE \eqref{E31} with initial condition $Y(0;y)=y$. By  standard SDE estimates there exists a constant $M>0$ such that for all $t >0$ and $y_1,y_2\in \mathbb{R}^n$, 
\[
\mathbb{E} \, \| Y(t;y_{1}) -  Y(t;y_{2})\|^{2} \le M \e^{-2L_{2} t}\|y_{1}-y_{2}\|^{2}.
\]
Because the function $h$ is Lipschitz continuous there exists a constant $N>0$ such that
\begin{align*}
&|G(t,y_{1}) - G(t,y_{2})|\\ \quad &\leq N\,  \mathbb{E} \, \int_{0}^{t}  \e^{\max \left\{\int_{0}^{s} h(Y(k; y_{1})) \d k,\int_{0}^{s} h(Y(k;y_{2}))\d k \right\}} \int_{0}^{s} \| Y(k;y_{1}) -  Y(k;y_{2})\| \d k \d s \\ &\leq N \int_{0}^{t}\left[\mathbb{E}\int_0^s  \| Y(k;y_{1}) -  Y(k;y_{2})\| ^2  \d k\right]^{\frac{1}{2}} \\
&\quad \times \left[\mathbb{E}\, s \e^{2\max \left\{\int_{0}^{s} h(Y(k;y_{1})) \d k,\int_{0}^{s} h(Y(k;y_{2})) \d k\right\}} \right]^{\frac{1}{2}} \d s.
\end{align*}

Passing from $y_{1}$ to $y_{2}$ and using the dominated convergence theorem we get the estimate
\[
\|D_{y}G(t,y)\| \leq  N_1 \int_{0}^{t} \left[ \int_{0}^{s}\e^{-2L_{2} k} \d k  \right]^{\frac{1}{2}}\left[\mathbb{E}\, s \e^{\int_{0}^{s}2h(Y(k;y))\d k} \right]^{\frac{1}{2}} \d s.
\]
Finally, for $y \in B(0,n)$ we get
\[
\|D_{y}G(t,y)\| \leq \frac{N_1}{\sqrt{2L_{2}}} \int_{0}^{+\infty} \sqrt{s \kappa(s,n)}\d s.
\]

Almost the same estimates can be done for the Lipschitz constant of the derivative $\frac{\partial G}{\partial t}$. In fact  we have
\[
\left\vert \frac{\partial G}{\partial t}(t, y_{1}) - \frac{\partial G}{\partial t}(t,y_{2})\right\vert  = \left| \e^{\int_{0}^{t}2h(Y(k;y_{1}))\d k}- \e^{\int_{0}^{t}2h(Y(k;y_{2}))\d k}\right|. 
\]
And by repetitive arguments we arrive at the estimate 
\[
\left\vert \frac{\partial G}{\partial t}(t,y_{1}) - \frac{\partial G}{\partial t}(t,y_{2})\right \vert \leq \widetilde{N} \sqrt{\kappa(t,n)}\|y_{1}-y_{2}\|, \quad  y_{1},y_{2} \in B(0,n).
\]

Now we may use the Schauder estimates (see e.g. Gilbarg - Trudinger \cite[Theorem 6.2]{Gilbarg}) to prove that there exists a sequence $(t_{n}, n \in \mathbb{N})$ such that  $G(y,t_{n})$ converges  uniformly on each ball to the function $G \in C^{2}(\mathbb{R}^{n})$ satisfying   \eqref{E86}.

\end{proof}

\begin{remark}
Condition \eqref{E87} is not in an analytic form but it can be easily verified for example in the affine model framework.
\end{remark}

In the case of unbounded function $\varphi$ the use of the verification theorem  needs a  justification. To do this consider an arbitrary  sequence of finite time investment horizons $(t_{n}; n \in \mathbb{N})$ and the corresponding sequence of the value functions
\[
V_{t_n}(z,y):=\sup_{\psi,C} \mathbb{E}\, \frac{1}{\alpha} \int_0^{t_{n}}\e ^{-\gamma t} \left(C(t)z^{\psi,C}(t)\right)^\alpha \d t.
\]
Let $\left((\widehat{\psi}_{n},\widehat{C}_{n});n \in \mathbb{N} \right)$ be the corresponding sequence of optimal pair of controls.

\begin{theorem}\textbf{(Verification theorem)}\label{T84}  Assume Hypotheses \ref{A1} to \ref{A4}. Additionally assume that  there exists a sequence $(t_{n}; n \in \mathbb{N})$, $t_{n} \to \infty$ such that $V_{t_n}(z,y) \to V(z,y)$, where $V$ is given by \eqref{E81} and $G$ is a ${C}^{2} (\mathbb{R}^{n})$) classical solution to \eqref{E84}. Then any pair $(\widehat{\psi},\widehat{C})$ satisfying \eqref{E82}, \eqref{E83} and 
\begin{equation}\label{E88}
\begin{split}
&\mathbb{E} \sup_{0\leq k \leq t} \left[\left(z_{k}^{\widehat{\psi},\widehat{c}} \right)^{\alpha} G^{\frac{1}{1-\alpha}}(Y_{k})\right]<+\infty\qquad  \forall\, t\ge 0, \\ &\lim_{t \to \infty} \mathbb{E}\, \e^{-\gamma t} V(z^{\widehat{\psi},\widehat{C}}(t),Y(t)) =0,
\end{split}
\end{equation}
is an optimal solution for the infinite horizon optimization problem.
\end{theorem}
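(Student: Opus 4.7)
My plan is a verification argument via It\^o's formula applied to the candidate $\Phi(t,z,y) := e^{-\gamma t}V(z,y) = e^{-\gamma t}G(y)^{1/(1-\alpha)}z^\alpha$ along the controlled pair $(z^{\psi,C}, Y)$. The elliptic HJB equation \eqref{E84} is arranged precisely so that the drift of $\Phi$ plus the running utility $\tfrac{1}{\alpha}e^{-\gamma t}(C(t)z^{\psi,C}(t))^\alpha$ is pointwise $\leq 0$, with equality exactly when the feedback conditions \eqref{E82}--\eqref{E83} are satisfied, i.e., along $(\widehat\psi, \widehat C)$. I would introduce a localizing sequence $\tau_m\uparrow\infty$ reducing the local martingale to a true martingale, integrate on $[0, t\wedge\tau_m]$, and take expectations to obtain, for every admissible $(\psi, C)$,
\[
V(z,y) \;\geq\; \mathbb{E}\bigl[e^{-\gamma(t\wedge\tau_m)}V(z^{\psi,C}(t\wedge\tau_m), Y(t\wedge\tau_m))\bigr] + \tfrac{1}{\alpha}\mathbb{E}\!\int_0^{t\wedge\tau_m}\! e^{-\gamma s}(Cz^{\psi,C})^\alpha \d s,
\]
with equality along $(\widehat\psi, \widehat C)$.

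For an arbitrary admissible $(\psi, C)$, since $\alpha\in(0,1)$ and $V\geq 0$, Fatou's lemma on the (non-negative) boundary term and monotone convergence on the integral let me send $m\to\infty$ and then $t\to\infty$, yielding $J(z,y,\psi,C)\leq V(z,y)$; this already establishes $V\geq\sup J$. For the candidate optimum the inequality is in fact an equality, and hypothesis \eqref{E88} supplies the uniform integrability needed to pass $m\to\infty$ by dominated convergence, producing for every $t\geq 0$
\[
V(z,y) \;=\; \beta(t) + \tfrac{1}{\alpha}\mathbb{E}\!\int_0^t\! e^{-\gamma s}(\widehat C z^{\widehat\psi,\widehat C})^\alpha \d s, \qquad \beta(t):=\mathbb{E}\bigl[e^{-\gamma t}V(z^{\widehat\psi,\widehat C}(t), Y(t))\bigr].
\]
Since the integral is non-decreasing in $t$ and $\beta\geq 0$, $\beta$ is non-increasing and has a limit $\ell\geq 0$; monotone convergence on the integral then gives $V(z,y) = \ell + J(z,y,\widehat\psi, \widehat C)$.

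The hard part will be to show $\ell = 0$, which combined with $V\geq\sup J$ delivers $J(z,y,\widehat\psi,\widehat C) = V(z,y) = \sup J$ and hence the optimality of $(\widehat\psi, \widehat C)$. This is where the hypothesis $V_{t_n}\to V$ enters decisively. Evaluating the identity at $t = t_n$ and using the sub-optimality $\tfrac{1}{\alpha}\mathbb{E}\int_0^{t_n}e^{-\gamma s}(\widehat C z^{\widehat})^\alpha \d s \leq V_{t_n}(z,y)$ of $(\widehat\psi,\widehat C)|_{[0,t_n]}$ as a finite-horizon strategy gives $\beta(t_n)\geq V(z,y)-V_{t_n}(z,y)\to 0^+$. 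Complementarily, extending a finite-horizon maximizer of $V_{t_n}$ (provided by Theorem~\ref{T41}) to $[0,\infty)$ by zero consumption produces an admissible strategy whose infinite-horizon reward equals $V_{t_n}(z,y)\to V(z,y)$, proving $\sup J = V$. Since the Hamiltonian is strictly concave in $c$ and (under Hypothesis~\ref{A4}) in $\psi$, any maximizer of $J$ must satisfy the feedback relations \eqref{E82}--\eqref{E83} and hence coincide with $(\widehat\psi,\widehat C)$; combined with $\sup J = V$ and the monotone convergence $\beta(t)\downarrow\ell$, this forces $\ell = 0$, closing the verification. The delicate interplay between the boundary behaviour at infinity and the finite-horizon convergence $V_{t_n}\to V$ is the chief technical point of the argument; everything else is routine localization and dominated-convergence bookkeeping.
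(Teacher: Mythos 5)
Your setup (It\^o plus localization, Fatou/monotone convergence to get $V\ge\sup J$, and extension of finite-horizon optimizers to get $\sup J\ge V$) is sound and matches the first half of the paper's argument. The gap is in the last step. Your decomposition $V(z,y)=\beta(t)+\tfrac1\alpha\mathbb{E}\int_0^t e^{-\gamma s}(\widehat C z^{\widehat\psi,\widehat C})^\alpha\,\d s$ reduces optimality of the candidate to the transversality condition $\ell=\lim_t\beta(t)=0$, but you never actually prove it. The bound you extract from $V_{t_n}\to V$, namely $\beta(t_n)\ge V-V_{t_n}\to 0$, is a \emph{lower} bound on $\beta(t_n)$ and hence vacuous (you already know $\beta\ge 0$); to conclude you would need an upper bound on $\beta(t_n)$, i.e.\ that $\tfrac1\alpha\mathbb{E}\int_0^{t_n}e^{-\gamma s}(\widehat Cz)^\alpha\,\d s$ is close to $V_{t_n}$, which is not known — the finite-horizon optimizer depends on the horizon and differs from $(\widehat\psi,\widehat C)$. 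The closing appeal to strict concavity ("any maximizer must coincide with the candidate, hence $\ell=0$") is circular: it presupposes that the supremum is attained, which is precisely what is being proved. Note also that \eqref{E88} is a per-$t$ integrability condition and by itself gives no decay of $\mathbb{E}\,[e^{-\gamma t}V(z(t),Y(t))]$ as $t\to\infty$, so it cannot substitute for transversality here.

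The paper closes this gap by a different ordering of limits, which is the actual content of its "novel" verification argument. It applies It\^o to $e^{-\gamma t}V(z(t\wedge\tau_n),Y(t\wedge\tau_n))$ with the discount factor \emph{not} stopped. For fixed $n$ the stopped state process stays in a ball, so letting $t\to\infty$ first kills the boundary term trivially ($e^{-\gamma t}\to0$ against a bounded quantity); the price is an extra residual term $\gamma\,\mathbb{E}\int_0^{\infty}\chi_{\{\tau_n\le s\}}e^{-\gamma s}V(z(s\wedge\tau_n),Y(s\wedge\tau_n))\,\d s$, which is then sent to zero as $n\to\infty$ using exactly hypothesis \eqref{E88} and dominated convergence. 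This yields $V(z,y)=J(z,y,\widehat\psi,\widehat C)$ directly, without ever needing $\beta(t)\to0$. If you want to keep your ordering of limits, you must supply an independent proof of the transversality condition, which the stated hypotheses do not obviously provide.
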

\begin{proof} Let  $V_{t_n}(z,y)$ converges  to $V(z,y)$. Choose any admissible strategy $(\psi,C)\in \mathcal{A}_{+\infty}$. Note that 
\[
V_{t_n}(z,y) = \mathbb{E} \frac{1}{\alpha} \int_{0}^{t_{n}}e^{-\gamma s}\left(\widehat{C}_{n}(s)z^{\widehat{\psi}_{n},\widehat{C}_{n}}(s)\right)^{\alpha} \d s \geq \mathbb{E} \, \frac{1}{\alpha} \int_{0}^{t_{n}}e^{-\gamma s}\left(C(s)z^{\psi,C}(s)\right)^{\alpha} \d s .
\]
 This ensures that
 \begin{equation} \label{E89}
 V(z,y)= \sup_{\psi,C} \mathbb{E}\, \frac{1}{\alpha}\int_{0}^{+\infty}e^{-\gamma s} \left(C(s)z^{\psi,C}(s)\right)^{\alpha} \d s.
 \end{equation}
 Now, we need only to prove that the supremum in \eqref{E89} is attained at $(\widehat{\psi},\widehat{C})$.  Let us apply  the It\^o formula to obtain the dynamics of  $\e^{-\gamma t} V(z^{\widehat{\psi},\widehat{C}}(t),Y(t))$.  We obtain
\begin{equation*}
  \mathbb{E}\, \e^{-\gamma t \wedge \tau_{n}} V(z^{\widehat{\psi},\widehat{C}}(t\wedge \tau_{n}),Y(t\wedge \tau_{n}))=V(z,y) -\mathbb{E}\frac{1}{\alpha}\int_{0}^{t \wedge \tau_{n}}e^{-\gamma s}\left(\widehat{C}(s)z^{\widehat{\psi},\widehat{C}}(s)\right)^{\alpha} \d s,
\end{equation*}
where $\{\tau_{n}\}_{n \in \mathbb{N}}$ is a localizing sequence of stopping times.
We can now let $n \to + \infty$ and use the first part of condition \eqref{E88} to apply dominated convergence on the left hand side. On the right hand side we can use the monotone convergence theorem. After all, we have
\begin{equation*}
  \mathbb{E}\, \e^{-\gamma t} V(z^{\widehat{\psi},\widehat{C}}(t),Y(t))=V(z,y) -\mathbb{E}\frac{1}{\alpha}\int_{0}^{t}e^{-\gamma s}\left(\widehat{C}(s)z^{\widehat{\psi},\widehat{C}}(s)\right)^{\alpha} \d s.
\end{equation*}
Consequently, by applying the second part of condition \eqref{E88}, we obtain the desired formula 
 \[
V(z,y) = \mathbb{E} \frac{1}{\alpha}\int_{0}^{+\infty}e^{-\gamma s}\left(\widehat{C}(s)z^{\widehat{\psi},\widehat{C}}(s)\right)^{\alpha} \d s.
 \]
\end{proof}

\begin{example}\textbf{(Vasicek model)} In the Vasicek model (see Section \ref{SVasicek}), $G(t,r)$ is given by \eqref{E72} and consequently  
\begin{equation} \label{E810}
G(r)=\int_{0}^{+\infty} \e^{ m_{1,0,s} +m_{2,0,s} r +\frac{1}{2} \sigma_{0,s}^{2}} \d s \, ,
\end{equation}
where $\sigma_{0,s}$, $m_{1,0,s}$, and $m_{1,0,s}$ were defined in Proposition \ref{PVasicek}.  To ensure convergence of the integral in \eqref{E810} we have to assume
\[
\int_{0}^{+\infty} \e^{m_{1,0,s}  +\frac{1}{2} \sigma_{0,s}^{2}}    \d s < + \infty.
\]
It is not difficult to find a sufficient condition for the coefficients of the model to ensure such convergence. We left the problem to the reader. However, it should be noted that the coefficient $m_{2,0,s}$ is uniformly bounded and therefore the fraction $\frac{D_rG}{G}$ is uniformly bounded as well.
\end{example}

\begin{example}\textbf{(CIR model)} In the CIR model (see Section \ref{SCIR}), $G(t,r)$ is given by \eqref{E74} and consequently 
$G(r)= \int_{0}^{+\infty} \e^{m_{1,0,s} +m_{2,0,s} r } \d s$. 
Obviously, we should require that $\int_{0}^{+\infty} \e^{m_{1,0,s}}\d s < +\infty$.  Note that  under this assumption the quotient  $\frac{D_{r}G}{G}$ is uniformly  bounded.
\end{example} 
 
 \appendix
\section{Short introduction to HJM model}\label{Appendix}
 
 Let us denote by $P(t,S)$ the price at time $t$ of a bond paying $1$ at time $S$. Assume that  the \emph{forward rates}
$f(t,S)= -\frac{\partial }{\partial S} \log P(t,S)$, $0\le t\le S$, are given by the It\^o equation 
$$
\d f(t,S)= \mu(t,S)\d t +\langle \xi(t,S), \d W(t)\rangle , 
$$ 
where $W=(W_1,\ldots, W_m)$ is an $m$ dimensional  Wiener process defined on a filtered probability space $(\Omega,\mathfrak{F},(\mathfrak{F}_t),\mathbb{P})$,  $\mu$ and $\xi$ are $\mathbb{R}$  and $\mathbb{R}^{m}$ valued processes which may depend on the forward rate $f$, and $\langle\cdot,\cdot \rangle$ denotes the Euclidean scalar product. We denote by $\|\cdot \|$ the corresponding norm. Clearly $P(t,S)=\e^{-\int_t^S f(t,u)\d u}$.  The so-called \emph{short-rate} process $\overline{r}(t):=f(t,t)$ defines the bank rate at time $t$. 

Let $T\in (0,+\infty)$ be a finite time horizon. It is well-known, see e.g. Barski and Zabczyk \cite{Barski-Zabczyk} or the original Heath, Jarrow and Morton paper \cite{HJM},  that the model  $P(t,S)$, where $t\in [0,T]$ and $t\le S<+\infty$,  is free of arbitrage if and only if there is an adapted process $\lambda$ such that  
$$
\mathbb{P}\left(\int_0^T\| \lambda(u)\|^2  \d u<+\infty\right)=1,\quad \mathbb{E}\, \mathcal{E}(\lambda)=1, 
$$
where 
$$
\mathcal{E}\,(\lambda):=  \e^{-\int_0^T\langle \lambda(u),\d W(u)\rangle -\frac 12 \int_0^T \|\lambda (u)\|^2\d u}, 
$$
and the following \emph{HJM condition is satisfied}
$$
\mu(t,S)=\left\langle \xi(t,S),  \int_t^S\xi(t,u)\d u +\lambda(t)\right\rangle, \qquad \forall\, 0\le t\le T,\quad \forall\, t\le S.  
$$
Recall that $\d \mathbb{P}^*= \mathcal{E}(\lambda)\d \mathbb{P}$ is the \emph{martingale measure}; the discounted prices $P(t,S)\e^{-\int_0^t \overline {r}(s)\d s}$, $S\le T$,  are local martingales with respect to $\mathbb{P}^*$. Moreover,   $W^*(t)= W(t)+ \int_0^t \lambda(u)\d u$ is a Wiener process with respect to $\mathbb{P}^*$.

For our purposes it is convenient to rewrite the prices and forward rates in the so-called \emph{Musiela parametrization}
$$
P(t)(x):= P(t,t+x), \quad r(t)(x):=f(t,t+x), \qquad x\ge 0.  
$$
Then 
$$
P(t)(x)= \e^{-\int_0^x r(t)(u)\d u}, \qquad r(t)(x)=-\frac{\partial }{\partial x} \log P(t)(x), 
$$
the short rate is given by the formula  $\overline{r}(t):= r(t)(0)$ and 
\begin{multline*}
r(t)(x)= r(0)(t+x)+ \int_0^t b(s)(x+t-u)\d u \\
+ \int_0^t \langle \sigma(s)(x+t-u),\d W(u)\rangle, 
\end{multline*}
where 
$$
b(t)(x):= \mu(t,t+x), \quad \sigma(t)(x):= \xi(t,t+x),\qquad x\ge 0. 
$$
Note that the HJM  conditions has the form   
$$
\begin{aligned}
b(t)(x)&=\mu(t,t+x)= \left\langle \xi(t,t+x), \int_t^{t+x}\xi(t,u)\d u +\lambda(t)\right\rangle\\
&= \left\langle \sigma(t)(x),  \int_0^x \sigma(t)(y)\d y +\lambda(t)\right\rangle \\
&=  \left\langle \sigma(t)(x),  \tilde {\sigma}(t)(x) +\lambda(t)\right\rangle,
\end{aligned}
$$
where 
$\tilde \sigma(t)(x):= \int_0^x \sigma(t)(u)\d u$.  In general $b$ and $\sigma$ may depend on $r$, 

Informally,   $S(t)\psi(x)=\psi(x+t)$ is the semigroup generated by the derivative operator $\frac {\partial }{\partial x}$. Thus $r$ is the so-called mild solution to the following stochastic partial differential equation 
$$
\d r = \left( \frac {\partial r}{\partial x}+ b\right)\d t +\langle \sigma , \d W\rangle. 
$$

We can now compute the stochastic derivative of 
$$
P(t)(x)= \e^{-\int_0^x r(t)(u)\d u}, \qquad t\ge 0.
$$
We have 
\begin{align*}
\d P(t)(x)&= P(t)(x)\left[ -\d \int_0^x r(t)(u)\d u + \frac 12 \left\|\int_0^x \sigma(t)(u)\d u\right\|^2\d t\right]. 
\end{align*}
Since, by the HJM condition,
\begin{align*}
&- \int_0^x b(t)(u)\d u + \frac 12 \left\|\int_0^x \sigma(t)(u)\d u\right\|^2 \\
&\quad = - \int_0^x \left\langle  \sigma(t)(u), \int_0^u \sigma(t)(y)\d y +\lambda(t)\right\rangle \d u + \frac 12 \left\|\int_0^x \sigma(t)(u)\d u\right\|^2 \\
&\quad = - \left\langle \lambda(t),\int_0^x \sigma(t)(u)\d u\right\rangle, 
\end{align*}
we eventually have 
\begin{align*}
&\frac{\d P(t)(x)}{P(t)(x)} \\ &\quad = - \int_0^x \frac{\partial r}{\partial u}(t)(u)\d u \d t - \left\langle\int_0^x \sigma(t)(u)\d u,   \lambda(t)\d t + \d W(t)\right\rangle\\
&\quad = \left[ -  r(t)(x)+ \overline r(t)\right] \d t   -  \left\langle \tilde \sigma(t)(x), \lambda(t)\d t +\d W(t)\right\rangle. 
\end{align*}

The instrument $P(t)(x)$, $t\ge 0$,  is called a \emph{sliding bond}.  After discounting it by a bank account we get
$$
\frac{\d \bar{P}(t)(x)}{\bar{P}(t)(x)} =  -  r(t)(x) \d t   -  \left\langle \tilde \sigma(t)(x), \lambda(t)\d t +\d W(t)\right\rangle. 
$$
The discounted sliding bonds  are  not  tradable instruments. To work with a portfolio process we will use the so-called \emph{rolling bonds}, see \eqref{E12}.  

\medskip
\noindent
\textbf{ Acknowledgements}. The authors would like to thank  anonymous referees  for careful reading of the paper and for providing many useful remarks and comments which have led to a substantially improved and more complete presentation

\end{document}